\newcommand{\erase}[1]{}
\newtheorem{theorem}{Theorem}[section]
\newtheorem{lemma}[theorem]{Lemma}
\newtheorem{proposition}[theorem]{Proposition}
\newtheorem{corollary}[theorem]{Corollary}
\newtheorem{_definition}[theorem]{Definition}
\newenvironment{definition}{\begin{_definition}\rm}{\end{_definition}}
\newtheorem{_remark}[theorem]{\it Remark}
\newenvironment{remark}{\begin{_remark}\rm}{\end{_remark}}
\newtheorem{_example}[theorem]{Example}
\newenvironment{example}{\begin{_example}\rm}{\end{_example}}
\newcommand{\F}{\mathord{\mathbb F}}
\newcommand{\HH}{\mathord{\mathbb H}}
\renewcommand{\P}{\mathord{\mathbb  P}}
\newcommand{\Q}{\mathord{\mathbb  Q}}
\newcommand{\R}{\mathord{\mathbb R}}
\newcommand{\Z}{\mathord{\mathbb Z}}
\newcommand{\BBB}{\mathord{\mathcal B}}
\newcommand{\EEE}{\mathord{\mathcal E}}
\newcommand{\HHH}{\mathord{\mathcal H}}
\newcommand{\LLL}{\mathord{\mathcal L}}
\newcommand{\OOO}{\mathord{\mathcal O}}
\newcommand{\PPP}{\mathord{\mathcal P}}
\newcommand{\QQQ}{\mathord{\mathcal Q}}
\newcommand{\RRR}{\mathord{\mathcal R}}
\newcommand{\TTT}{\mathord{\mathcal T}}
\newcommand{\VVV}{\mathord{\mathcal V}}
\newcommand{\maprightsp}[1]{\; \smash{\mathop{\; \longrightarrow \; }\limits\sp{#1}}\; }
\newcommand{\inj}{\hookrightarrow}
\newcommand{\isom}{\mathbin{\,\raise -.6pt\rlap{$\to$}\raise 3.5pt \hbox{\hskip .3pt$\mathord{\sim}$}\,}}
\newcommand{\set}[2]{\{\, {#1} \, \mid \, {#2} \,  \}}
\newcommand{\shortset}[2]{\{ {#1} \,|\, {#2}   \}}
\newcommand{\gen}[1]{\langle {#1}  \rangle}
\newcommand{\tensor}{\otimes}
\newcommand{\sprime}{\sp\prime}
\newcommand{\spar}[1]{\sp{(#1)}}
\newcommand{\spprime}{\sp{\prime\prime}}
\newcommand{\sptimes}{\sp{\times}}
\newcommand{\sperp}{\sp{\perp}}
\newcommand{\inv}{\sp{-1}}
\newcommand{\NS}{\mathord{\mathrm {NS}}}
\newcommand{\GL}{\mathord{\mathrm {GL}}}
\newcommand{\PGL}{\mathord{\mathrm{PGL}}}
\newcommand{\Aut}{\operatorname{\mathrm {Aut}}\nolimits}
\newcommand{\Gal}{\operatorname{\mathrm {Gal}}\nolimits}
\newcommand{\pr}{\mathord{\mathrm {pr}}}
\newcommand{\Sing}{\operatorname{\mathrm {Sing}}\nolimits}
\newcommand{\rmand}{\textrm{and}}
\newcommand{\quand}{\quad\rmand\quad}
\newcommand{\mystruthd}[2]{\phantom{\hbox{\vrule  height #1 depth #2}}}
\numberwithin{equation}{section}
\numberwithin{table}{section}
\numberwithin{figure}{section}
\renewcommand{\qed}{\hfill {$\Box$}}
\newcommand{\PGU}{\mathord{\mathrm {PGU}}}
\newcommand{\GU}{\mathord{\mathrm {GU}}}
\newcommand{\QT}{\mathord{QT}}
\newcommand{\QB}{\mathord{\mathcal{Q}}}
\newcommand{\vect}[1]{\mbox{\boldmath $#1$}}
\newcommand{\va}{\vect{a}}
\newcommand{\vp}{\vect{p}}
\newcommand{\vx}{\vect{x}}
\newcommand{\vy}{\vect{y}}
\newcommand{\smallva}{\mbox{\boldmath\scriptsize $a$}}
\newcommand{\transpose}[1]{\hskip 1.2pt {}^t\hskip -.8pt{#1}}
\newcommand{\intf}[3]{(#2, #3)_{#1}}
\newcommand{\intfNS}[2]{(#1, #2)_{\NS}}
\newcommand{\intfphantom}[1]{\intf{#1}{\phantom{\cdot\,}}{\phantom{\cdot}}}
\newcommand{\Pol}{\mathord{\mathcal{P}}}
\newcommand{\theh}{h_F}
\newcommand{\theNS}{M_{\NS}}
\newcommand{\Stab}{\mathord{\mathrm{Stab}}}
\newcommand{\RT}{\mathord{\mathrm{RT}}}
\newcommand{\isoms}{\mathord{\mathrm{isom}}}
\newcommand{\aut}{\mathord{\mathrm{aut}}}
\newcommand{\Id}{\mathord{\mathrm{Id}}}
\newcommand{\orb}{\mathord{o}}
\newcommand{\algcloF}{\overline{\F}}
\newcommand{\srt}{\sqrt{2}}
\newcommand{\projmodel}{\EEE}
\newcommand{\Ball}{\BBB}
\newcommand{\Exc}{\mathord{\mathrm{Exc}}}
\newcommand{\Lin}{\mathord{\mathrm{Lin}}}
\newcounter{algostepnumb}
\newcommand{\algostep}{\stepcounter{algostepnumb}\subsection{Step \thealgostepnumb} }
\newcommand{\hhline}[4]{$\ell\sb{#1}:=\ell\sp{#2}(#3)$}
\newlength{\NSbasisleng}
\newcommand{\PPMM}[9]{
\begin{eqnarray*}
\hskip -.9cm &&
\hbox{$\projmodel_{#1}=\overline{\projmodel}_{#2}$:}\;\;
\hbox{$\mathrm{RT}=#3$:}\;\;
\hbox{$|\mathrm{aut}|= #4$:}\;\; 
\hbox  {$\text{\rm N}= #7$:} \;\;  {\rm h}=#9:\\
\hskip -.9cm &&\fbox{\parbox{12.5cm}{$#6$}}
\end{eqnarray*}
\vskip -.35cm
}
\begin{document}

\title[Projective models of a supersingular $K3$ surface]%
{Projective models of the supersingular $K3$ surface with Artin invariant $1$
in characteristic $5$}

\author{Ichiro Shimada}
\address{
Department of Mathematics, 
Graduate School of Science, 
Hiroshima University,
1-3-1 Kagamiyama, 
Higashi-Hiroshima, 
739-8526 JAPAN
}
\email{shimada@math.sci.hiroshima-u.ac.jp
}

\thanks{Partially supported by
 JSPS Grants-in-Aid for Scientific Research (B) No.20340002 
}

\subjclass[2000]{14J28, 14G17}


\begin{abstract}
Let $X$ be a supersingular $K3$ surface in characteristic $5$ with Artin invariant $1$.
Then $X$ has a polarization   that realizes $X$
as the Fermat sextic double plane.
We present a list of polarizations 
of $X$ with degree $2$ whose intersection number with this Fermat sextic polarization  
is less than or equal to $5$,
and give the defining equations of the corresponding projective models.
We also present  a method to describe birational morphisms between  
these projective models explicitly.
As a by-product,
a non-projective automorphism of the Fermat sextic double plane is obtained.
\end{abstract}

\maketitle

%
%

\section{Introduction}
Let $Y$ be a supersingular $K3$ surface 
defined over an algebraically closed field of characteristic $p>0$.
Artin~\cite{MR0371899} showed that 
the discriminant of the  N\'eron-Severi lattice $\NS(Y)$ 
is written as $-p^{2\sigma}$,
where $\sigma$ is a positive integer  $\le 10$.
This integer $\sigma$ in called the \emph{Artin invariant} of $Y$.
It is proved in~\cite{MR563467, MR717616,  MR633161} that,
for  each prime $p$,
a supersingular $K3$ surface with Artin invariant $1$ 
in characteristic $p$ exists 
and is unique up to isomorphisms.
Recently, 
many detailed studies of
supersingular $K3$ surfaces with Artin invariant $1$ in small characteristics 
have appeared~(see~\cite{MR1935564K, ElkiesSchuett, MR2862188,
MR2211150, MR2987663, KondoShimada,   Sengupta}).
\par
\medskip
The purpose of this paper is to investigate  projective models 
of degree $2$ of the supersingular $K3$ surface $X$ with Artin invariant $1$
in characteristic $5$.
It is well-known that the Fermat  sextic  double plane
in characteristic $5$ is isomorphic to $X$.
This projective model enables us to calculate the defining ideals
of curves on $X$ whose classes generate $\NS(X)$.
Using this data,
we obtain many other projective models of  degree $2$,
present their explicit defining equations, 
and describe birational morphisms between them.
\par
\medskip  
Our method is  computational,
and can be easily adapted to 
other $K3$ surfaces in any characteristic.
In particular, we expect many geometric applications of  the algorithm in Section~\ref{subsec:polarizations}
that determines whether a given vector $v$ with $v^2>0$ in the N\'eron-Severi lattice of a $K3$ surface is nef or not. 
In fact, combining the algorithms developed in this paper  
with the Borcherds-Kondo method~\cite{MR913200, MR1654763, MR1618132}, 
we have succeeded in 
obtaining a set of generators of the automorphism group
of the supersingular $K3$ surface in characteristic $3$ with Artin invariant $1$ in~\cite{KondoShimada}.

\par
\medskip
We fix terminologies and explain our motivation.
Let $Y$ be a $K3$ surface defined over an algebraically closed field 
of arbitrary  characteristic.
Let  $\intfphantom{\NS}$ denote the intersection form 
of the N\'eron-Severi lattice $\NS(Y)$ of $Y$.
For  $v\in \NS(Y)$,
we denote by $\LLL_v\to X$ the corresponding line bundle.
Let $d$ be an even positive integer.
We say that a vector $h\in \NS(Y)$ is a \emph{polarization of degree $d$}
if $\intfNS{h}{h}$ is equal to $d$ and 
the complete linear system $|\LLL_h|$ is non-empty and has no fixed-components.
Let  $h$ be a polarization of degree $d$.
Then $|\LLL_h|$ is base-point free by Corollary 3.2 of~\cite{MR0364263},
and hence defines a morphism $\Phi_h$ from $Y$ to a projective space
of dimension $1+d/2$.
We denote by
$$
Y\maprightsp{\phi_h} Y_h\maprightsp{\psi_h} \P^{1+d/2}
$$
the Stein factorization of $\Phi_h$.
By~\cite{MR0146182, MR0199191}, 
the  normal surface $Y_h$ has  only rational double points as its singularities,
and $\phi_h$ is a contraction of an $ADE$-configuration of smooth rational curves.
We say that $\psi_h : Y_h\to \P^{1+d/2}$ is 
the \emph{projective model} of $Y$ corresponding to  $h$.
We put
$$
\Pol_d(Y):=\set{h \in \NS(Y)}{\textrm{$h$ is a polarization of degree $d$}}.
$$
The automorphism group $\Aut(Y)$ of $Y$ acts on  $\Pol_d (Y)$.
For $h, h\sprime\in \Pol_d  (Y)$, we say that
$h$ and $h\sprime$ are \emph{projectively equivalent} and write
$h\sim h\sprime$ if there exist an isomorphism $Y_h\isom Y_{h\sprime}$ and 
a linear automorphism $\P^{1+d/2}\isom \P^{1+d/2}$ that make the following diagram commutative:
\begin{equation}\label{eq:diagY}
\renewcommand{\arraystretch}{1.3}
\begin{array}{ccc}
 Y_h & \maprightsp{\psi_h} & \P^{1+d/2}\phantom{.} \\
\downarrow\hskip -2pt\wr & & \downarrow\hskip -2pt\wr \\
 Y_{h\sprime} & \maprightsp{\psi_{h\sprime}} & \P^{1+d/2}.
\end{array}
\end{equation}
It is obvious that the equivalence classes of $\sim$ in $\PPP_d(Y)$ are just 
the $\Aut(Y)$-orbits.
For $h\in \PPP_d(Y)$, 
the stabilizer subgroup  $\Aut(Y, h)$  of $h$ in  $\Aut(Y)$
 is the projective automorphism group of 
the projective model $\psi_h: Y_h\to \P^{1+d/2}$.
It is usually easy to determine $\Aut(Y, h)$.
Hence  it is important to study the equivalence classes 
of $\sim$ for the study of $\Aut(Y)$.
Moreover, 
to obtain an element of $\Aut(Y)$  \emph{not} contained in $\Aut(Y, h)$,
we need to write the isomorphism $Y_h\isom Y_{h\sprime}$ 
in~\eqref{eq:diagY} explicitly.
\par
\medskip
We concentrate upon the supersingular $K3$ surface $X$ with Artin invariant $1$
in characteristic $5$, and its projective models $\psi_h : X_h \to \P^2$ of degree $2$.
It is well-known that $X$ has a projective model $\psi_F: X_F\to \P^2$ of degree $2$,
where $X_F$ is defined by 
\begin{equation}\label{eq:XF}
X_F :=\{w^2=x^6+y^6+z^6\}\subset \P(3,1,1,1)
\end{equation}
in  the weighted projective space $\P(3,1,1,1)$,  
and the double covering $\psi_F$ is given by $[w:x:y:z]\mapsto [x:y:z]$, which 
is branching along the Fermat sextic curve
$$
B_F : x^6+y^6+z^6=0.
$$
We denote by  $\theh\in \NS(X)$  a polarization of the projective model $\psi_F: X_F\to \P^2$,
and by
$$
\Phi_F\;\;:\;\; X\maprightsp{\phi_F} X_F\maprightsp{\psi_F} \P^{2}
$$
the Stein factorization of the morphism given by $|\LLL_{\theh}|$.
Note that the factor $\phi_F: X\to X_F$ of $\Phi_F$ is an isomorphism.
The  group $\Aut(X, \theh)$ is 
an extension of the projective automorphism group $\PGU_3(\F_{25})$ 
of $B_F\subset \P^2$ by $\Gal (X_F/\P^2)\cong \Z/2\Z$.
In particular, 
the order of  $\Aut(X, \theh)$ is  $756,000$.
Using this projective model $\psi_F: X_F\to \P^2$,
we obtain a set of generators of $\NS(X)$~(see Section~\ref{sec:NSofX}). 
It turns out that $\NS(X)$ is generated by the numerical equivalence classes of 
curves on $X_F$ defined over $\F_{25}$.
In particular, 
every projective model of $X$ is projectively equivalent to a
projective model defined over $\F_{25}$
(see~\cite{MR2890513}).
Moreover, the Frobenius action of $\Gal(\F_{25}/\F_{5})$ on $X_F$ induces an 
action of $\Gal(\F_{25}/\F_{5})$ on $\NS(X)$,
which we denote by $v\mapsto \bar{v}$.
It is easy to see that
 $\Gal(\F_{25}/\F_5)$ acts on the set of $\Aut(X)$-orbits in $\Pol_d(X)$.
\par
\medskip
For each positive integer $r$, we consider the  subset
$$
\Ball_r:=\set{v\in \NS(X)}{\intfNS{v}{\theh}\le r}
$$
of $\NS(X)$, which can be regarded as a neighborhood of $h_F$ in $\NS(X)$.
Since $\overline{\theh}=\theh$,
 $\Aut(X, \theh)$ and $\Gal(\F_{25}/\F_{5})$ act on  $\Pol_2(X)\cap \Ball_r$.
 Our main result is the following:
%
%
%
\begin{theorem}\label{thm:projmodels}
The set $\Pol_2(X)\cap \Ball_5$ consists of $146,945,851$ vectors,
and they are decomposed into the equivalence classes $\projmodel_0, \dots, \projmodel_{64}$
under the relation $\sim$.
The details of these equivalence classes are described in  Section~\ref{sec:list}.
\end{theorem}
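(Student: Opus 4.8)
The plan is to reduce the entire statement to a finite lattice computation carried out with the explicit Gram matrix of $\NS(X)$ produced in Section~\ref{sec:NSofX}, together with the distinguished polarization $\theh$ satisfying $\intfNS{\theh}{\theh}=2$. First I would record that, since $\NS(X)$ has signature $(1,21)$ and $\intfNS{\theh}{\theh}=2>0$, the orthogonal complement $\theh^\perp$ is negative definite of rank $21$. If $v$ is any polarization of degree $2$, then $v$ lies in the closure of the positive cone on the same side as $\theh$, so the reverse Cauchy--Schwarz inequality for a Lorentzian lattice gives $\intfNS{v}{\theh}^2\ge \intfNS{v}{v}\,\intfNS{\theh}{\theh}=4$, whence $\intfNS{v}{\theh}\ge 2$. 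Thus every member of $\Pol_2(X)\cap\Ball_5$ has $\intfNS{v}{\theh}=c$ for some $c\in\{2,3,4,5\}$. For each such $c$ one writes $v=\tfrac{c}{2}\theh+u$ with $u\in\theh^\perp\otimes\Q$ and $u^2=2-\tfrac{c^2}{2}\le 0$; integrality of $v$ forces $u$ into a fixed coset of the definite lattice $\theh^\perp\cap\NS(X)$ with prescribed norm, so the candidates form a finite set that I would enumerate level by level with a short-vector routine. The level $c=2$ already forces $u=0$, i.e. $v=\theh$, and so contributes $\theh$ alone.

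Next I would run the test of Section~\ref{subsec:polarizations}, which decides nefness of a class of positive self-intersection, on each candidate to single out the genuine polarizations: a candidate passes exactly when no $(-2)$-class $r$ satisfies $\intfNS{\theh}{r}>0>\intfNS{v}{r}$, i.e. when no wall separates $v$ from the ample class $\theh$. To keep this feasible across the full candidate list I would first partition the candidates into orbits under $\Stab:=\Aut(X,\theh)$, the group of order $756{,}000$ that is an extension of $\PGU_3(\F_{25})$ by $\Z/2\Z$; since $\Stab$ fixes $\theh$ it preserves each level set $\intfNS{v}{\theh}=c$ and preserves nefness, so the test need only be applied to orbit representatives. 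Tallying the nef candidates then yields the asserted cardinality $146{,}945{,}851$.

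For the decomposition into $\sim$-classes I would use that $\sim$-equivalence coincides with $\Aut(X)$-equivalence, and identify $\Aut(X)$, via the crystalline Torelli theorem of Ogus (as made effective by Rudakov--Shafarevich), with the group $\aut$ of isometries of $\NS(X)$ preserving the ample cone and the crystalline period; for Artin invariant $1$ the latter is a congruence condition on the induced action on the discriminant group $(\Z/5)^2$. The crucial finiteness observation is that for two degree-$2$ polarizations $v,v'$ both complements $v^\perp,v'^\perp$ are negative-definite of rank $21$, so the isometries $g\in O(\NS(X))$ with $g(v)=v'$ form a finite set computable by lattice-isomorphism methods. For each such $g$ I would test membership in $\aut$: nef-cone preservation reduces to checking that $g(\theh)$ is nef (again via the Section~\ref{subsec:polarizations} algorithm), and period preservation is the discriminant-form congruence. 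Declaring $v\sim v'$ whenever some $g$ passes both tests, I would form the resulting graph on a system of representatives (pre-reduced by $\Stab$ and by the Frobenius action of $\Gal(\F_{25}/\F_5)$) and read off its connected components, which should number $65$ and yield $\projmodel_0,\dots,\projmodel_{64}$.

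The hard part will be the orbit decomposition rather than the enumeration. On the theoretical side it rests on the correct effective form of crystalline Torelli and on pinning down the period/discriminant condition cutting $\aut$ out of $O(\NS(X))$; on the computational side it requires enumerating the finite but potentially large sets of isometries between the rank-$21$ definite complements and then \emph{certifying completeness}, namely that the $65$ classes are pairwise inequivalent. The nef-testing of the full candidate list is laborious but routine once the $\Stab$-reduction is in place; the genuine obstacle is guaranteeing that the merging of $\Stab$-orbits into $\Aut(X)$-orbits is neither too coarse (each declared equivalence is realized by an actual automorphism) nor too fine (no pair among the $65$ representatives is secretly equivalent).
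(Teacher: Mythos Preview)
Your enumeration stage matches the paper's (Steps~1--5 of Section~\ref{sec:proof1}): list the vectors with $v^2=2$ and $(v,\theh)\le 5$, reduce modulo $\Aut(X,\theh)$, and apply Proposition~\ref{prop:polarization} to orbit representatives. The divergence is in how the $\sim$-classes are formed. The paper works \emph{geometrically}: for each representative $h$ it actually computes the morphism $\phi_h\colon X_F\to X_h$ via the Gr\"obner-basis machinery of Section~\ref{sec:morphism} and extracts the branch sextic $s_h$; two polarizations are $\sim$-equivalent iff $B_h$ and $B_{h'}$ are projectively isomorphic, and the decisive observation (Remark~\ref{rem:QQQSing}) is that every singular $B_h$ arising here has all its singular points $\F_{25}$-rational and containing a projective frame, which forces $\isoms(B_h,B_{h'})\subset\PGL_3(\F_{25})$ and reduces $\sim$ to the finitely checkable relation $\sim_\F$ via a search over $\QQQ(B_h)$. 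The smooth case is settled separately by the Hermitian-form criterion for the Fermat sextic. You instead invoke Ogus's crystalline Torelli to realize $\Aut(X)$ inside $\mathrm{O}(\NS(X))$ and test lattice isometries $g$ with $g(v)=v'$ against the ample-cone and period conditions. This is a legitimate alternative, but two points deserve emphasis. First, the theorem as stated includes the content of Section~\ref{sec:list}---the defining equations of the $65$ models---which your purely lattice-theoretic route does not produce; the paper's method is constructive precisely so that these equations (and the data underlying Proposition~\ref{prop:splgen} and Example~\ref{example:nonprojautXF}) fall out along the way. Second, your plan requires making the period condition on the discriminant group explicit and relies on the surjectivity direction of Torelli, whereas the paper sidesteps Torelli entirely by comparing the projective models themselves. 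What your approach would buy is independence from the ideal-theoretic computations of Section~\ref{subsec:linebundle}, and it would scale more gracefully to polarizations of higher degree where computing the model becomes impractical.
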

We explain the items of the table in Section~\ref{sec:list}.
For $h\in \PPP_2(X)$,
let $B_h$ denote the branch curve of the double covering $\psi_h: X_h\to \P^2$.

\begin{itemize}
\item
$\projmodel_i=\overline{\projmodel}_j$ means that $\projmodel_i$
is equal to the image of $\projmodel_j$ under the action of $\Gal(\F_{25}/\F_{5})$ 
defined above.
In particular, $\projmodel_i=\overline{\projmodel}_i$ means that $\projmodel_i$ is self-conjugate,
while $\projmodel_i=\overline{\projmodel}_{i+1}$ means that $\projmodel_i$ is \emph{not} self-conjugate,
that the  items $\mathrm{RT}$,  $|\mathrm{aut}|$
and  $\mathrm{N}$  explained
below  are the same for $\projmodel_i$ and $\projmodel_{i+1}$, and   that
the defining equation of  $B_h$ 
 for $\projmodel_{i+1}$ is obtained from that for $\projmodel_{i}$
 by raising the coefficients to $5$th powers; that is, 
by changing the sign of  $\sqrt{2}$.
\item 
$\RT$ denotes the $ADE$-type of the singular points of 
$B_h$.
\item
$|\aut|$  denotes the order of the projective automorphism group of 
the plane curve $B_h\subset \P^2$.
Hence the order of $\Aut(X, h)$ is equal to $2\,|\mathrm{aut}|$.
\item
$\mathrm{N}$ is the total number of the vectors in $\projmodel_i\subset \Pol_2(X)\cap \Ball_5$.
\item
$\mathrm{h}$ is a sample element of $\projmodel_i$ written in
a row vector  with respect to the basis of $\NS(X)$ given in Section~\ref{sec:NSofX}.
\item 
An affine defining equation of  $B_h$ 
with coefficients in $\F_{25}$ is given in the framed box.
\end{itemize}
Each of the $65$ projective models in Theorem~\ref{thm:projmodels}
exhibits  interesting properties that are peculiar to  characteristic $5$.
One of these properties is the existence of \emph{splitting lines}.
A \emph{$(-2)$-curve} on $X$ is a smooth rational curve on $X$.
Let $h$ be a polarization of degree $2$ on $X$.
We say that a $(-2)$-curve $C$ on $X$ is \emph{$h$-exceptional} if $C$ is mapped to a point
by $\Phi_h : X\to \P^2$, 
while
$C$ is said to be an \emph{$h$-line} if $\Phi_h$ maps $C$ to a line on $\P^2$
isomorphically.
A line $l$ on $\P^2$ is said to be \emph{$h$-splitting} 
if $l$ is the image of an $h$-line by $\Phi_h$.
In other words,
a line $l\subset \P^2$ is $h$-splitting 
if and only if either $l$ is an irreducible component of  $B_h$,
or $l\not\subset B_h$ and the intersection multiplicity at each point of $l\cap B_h$ is even.
We observe the following:
\begin{proposition}\label{prop:splgen}
For each $h\in \Pol_2(X)\cap \Ball_5$,
the lattice $\NS(X)$ is generated by the classes of $h$-exceptional curves and $h$-lines.
\end{proposition}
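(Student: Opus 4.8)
Since the relation $\sim$ on $\Pol_2(X)$ has the $\Aut(X)$-orbits as its equivalence classes, and every $g\in\Aut(X)$ acts on $\NS(X)$ by an isometry carrying the $h$-exceptional curves and $h$-lines of $h$ onto those of $g(h)$, the validity of the statement depends only on the $\sim$-class of $h$. By Theorem~\ref{thm:projmodels} the set $\Pol_2(X)\cap\Ball_5$ splits into the $65$ classes $\projmodel_0,\dots,\projmodel_{64}$, so the plan is to fix one representative $h$ in each class and verify the assertion for these finitely many vectors. Moreover, since any superset of a generating set still generates, it suffices for each representative to exhibit \emph{some} $h$-exceptional curves and $h$-lines whose classes already span $\NS(X)$.

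For a fixed $h$ I would assemble the two families as follows. The $h$-exceptional curves are the irreducible $(-2)$-curves contracted by $\phi_h$; their classes constitute a fundamental system of the root system formed by the $(-2)$-vectors of the negative-definite lattice $h^\perp=\set{v\in\NS(X)}{\intfNS{v}{h}=0}$, whose type is the entry $\RT$ listed in Section~\ref{sec:list}, and reading off these simple roots is immediate once $h$ and the basis of Section~\ref{sec:NSofX} are fixed. For the $h$-lines I would use the explicit affine equation of $B_h$ recorded in Section~\ref{sec:list}: a $(-2)$-curve $C$ is an $h$-line precisely when $\Phi_h$ maps it isomorphically onto an $h$-splitting line, and by the characterization stated above the splitting lines are exactly the linear components of $B_h$ together with the lines meeting $B_h$ with even intersection multiplicity at every point. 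Computing these lines from the sextic equation and taking the reduced preimages under $\psi_h$ produces explicit irreducible $(-2)$-curves whose classes I would then determine.

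The concluding step is integer linear algebra. Writing the classes of the chosen $h$-exceptional curves and $h$-lines as row vectors in the basis of Section~\ref{sec:NSofX}, I would check that the resulting $\Z$-matrix has all elementary divisors equal to $1$; equivalently, that it contains a $22\times 22$ minor of determinant $\pm1$. This certifies that the selected classes generate $\NS(X)$, and running the check over all $65$ representatives finishes the proof.

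The crux is not this final linear algebra but the production of enough genuine $h$-lines. A vector $v$ with $v^2=-2$ and $\intfNS{v}{h}=1$ is automatically effective, since $h$ is nef and big; but it may be reducible, splitting off $h$-exceptional components, so one must certify that the classes chosen are honestly irreducible curves rather than such combinations. The explicit splitting lines of $B_h$ serve exactly this purpose: each is a geometric witness to an irreducible $h$-line. The substantive point --- and the feature peculiar to characteristic $5$ that the paper highlights --- is that every branch sextic $B_h$ in the list carries enough splitting lines that the associated $h$-lines, together with the $h$-exceptional curves, already fill out $\NS(X)$. Establishing this abundance of splitting lines for each of the $65$ sextics, and confirming the resulting rank-$22$ span, is where the real work of the proof concentrates.
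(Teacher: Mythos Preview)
Your approach is correct in outline, but it diverges from the paper's in a way worth noting. The paper does not use Theorem~\ref{thm:projmodels} or the equations of the branch curves $B_h$ at all to establish Proposition~\ref{prop:splgen}. Instead it works with the $232$ representatives $h\in\HHH$ of the $\Aut(X,h_F)$-orbits (not the $65$ $\sim$-classes), and for each such $h$ it computes the full sets $\Exc(h)$ and $\Lin(h)$ \emph{purely lattice-theoretically} via the algorithms of Sections~\ref{subsec:hexceptional} and~\ref{subsec:hlines}. The point you identify as the crux --- certifying that a $(-2)$-vector with $\intfNS{v}{h}=1$ is the class of an irreducible curve rather than a sum with $h$-exceptional components --- is handled not by producing a geometric witness (a splitting line of $B_h$), but by using the fixed ample class $h_F$: one stratifies the candidate vectors by $h_F$-degree and checks indecomposability combinatorially. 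Once $\Exc(h)\cup\Lin(h)$ is in hand as a finite list of integer vectors, a span check finishes the job. All of this happens \emph{before} any defining equation of $X_h$ or $B_h$ is computed.

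Your route --- reduce to $65$ representatives via $\sim$, read splitting lines off the sextic, pull them back through $\psi_h$, and identify their classes --- is logically sound, but it is more circuitous: it presupposes both Theorem~\ref{thm:projmodels} and the explicit morphisms $\phi_h\colon X_F\to X_h$ of Section~\ref{sec:morphism} in order to translate a splitting line of $B_h$ back into a vector of $\NS(X)$ written in the basis $[\ell_1],\dots,[\ell_{22}]$. The paper's method buys independence from all of that machinery; yours buys a smaller case count and a concrete geometric interpretation of each generator.
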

In fact, we establish a method to write the birational morphism
$\phi_h: X\to X_h$ explicitly as a list of rational functions on $X\cong  X_F$
for any $h\in \Pol_2(X)$.
Applying this method to a polarization $h\in \projmodel_0$
with $\intfNS{\theh}{h}=4$, we obtain the following:
\begin{example}\label{example:nonprojautXF}
There exists 
an automorphism $g$ of $X_F$ of order $2$ such that 
$\intfNS{\theh}{g^*\theh}=4$.
Moreover, we can write $g$  in  a form 
$$
(w,x,y)\mapsto [\omega(w,x,y):\xi_0(w,x,y):\xi_1(w,x,y):\xi_2(w,x,y) ],
$$
where $(w,x,y)$ are the affine coordinates of $\P(3,1,1,1)$ with $z=1$ in~\eqref{eq:XF},
and  $\omega, \xi_0, \xi_1, \xi_2$ are 
polynomials with coefficients in $\F_{25}$.
See~\cite{shimadaWWW} for the explicit presentation of these polynomials.
\end{example}
The study of  singularities of sextic double plane models of complex $K3$ surfaces
using lattice theory and computer-aided calculation was initiated 
by Urabe~\cite{MR1000608} and Yang~\cite{MR1387816}.
The idea of $h$-splitting lines was used in~\cite{MR2745755}
for the classification of Zariski pairs of simple sextic curves.
On the other hand, in~\cite{MR2282430, MR2129248, MR2036331}, 
sextic  double plane models of supersingular  $K3$ surfaces
were studied by lattice theory.
A shortcoming of the method in these works 
is that it gives only combinatorial data of the singularities of the projective models, and 
 does not  yield their defining equations explicitly.
\par
\medskip
 The new devices  in this article are the following:
 (i) Using the ample class $\theh\in \NS(X)$,
 we can determine whether a given vector $v\in \NS(X)$ is a polarization  or not.
 (ii)  The fact that the classes of $\theh$-lines span $\NS(X)$
enables us to calculate the equation of $X_h$ explicitly
and algorithmically.
(iii) To deal with the large number of polarizations,
we decompose them into $\Aut(X, \theh)$-orbits and calculate the projective model
only for a representative polarization of each orbit.
\par
\medskip
This paper is organized as follows.
In Section~\ref{sec:NSofX},
we give a set of $\theh$-lines whose classes form a basis
of $\NS(X)$.
In Section~\ref{sec:algorithm},
we present  algorithms 
that can be applied to lattices in general.
In Section~\ref{sec:geometricappli},
we apply them to $\NS(X)$ and describe algorithms
to calculate geometric data of $X$.
In Section~\ref{sec:morphism},
we explain how to calculate the morphisms
$\phi_h: X\to X_h$ and $\psi_h: X_h\to\P^2$
for a given polarization $h\in \Pol_2(X)$.
In Sections~\ref{sec:proof1} and~\ref{sec:proof2},
the computation we carried out to prove 
Theorem~\ref{thm:projmodels} and Example~\ref{example:nonprojautXF}
are explained.
Section~\ref{sec:list} is for the list of projective models.
\par
\medskip
{\bf Notation.}
(1)
A \emph{lattice} is a free $\Z$-module $L$ of finite rank 
with a non-degenerate symmetric bilinear form $\intfphantom{L}: L\times L\to \Z$.
\par
(2) 
The numerical equivalence class of a divisor $D$ on $X$ is denoted by $[D]\in \NS(X)$.
The intersection number of divisors $D$ and $D\sprime$ is written as 
$\intfNS{D}{D\sprime}$. 
\section{The N\'eron-Severi lattice of $X$}\label{sec:NSofX}
Recall that $B_F\subset \P^2$ is the Fermat curve of degree $6$
in characteristic $5$,
which is the branch curve of the projective model $\psi_F: X_F\to\P^2$
corresponding to the polarization $\theh\in \NS(X)$ of degree $2$.
We denote by $B_F(\F_{25})$ the set of $\F_{25}$-rational points of $B_F$.
It is known  that $|B_F(\F_{25})|=126$.
\par
\medskip
Let $l$ be a line on $\P^2$ tangent to $B_F$.
Since $B_F$ is the \emph{Hermitian curve} over $\F_{25}$,
either one of the following holds (see~\cite{MR0213949} or Chapter 23 of~\cite{MR1363259}):
\begin{itemize}
\item[(1)]  $l$ is tangent to $B_F$ at a point $[a:b:c]\notin B_F(\F_{25})$
 with intersection multiplicity $5$,
and intersects $B_F$  at the point $[a^{25}: b^{25}: c^{25}]$
transversely.
\item[(2)] $l$ is tangent to $B_F$ at $P\in B_F(\F_{25})$
with  intersection multiplicity $6$.
\end{itemize}
In the case (2), 
the inverse image of $l$ by the double covering $\Phi_F: X\to \P^2$
decomposes into two $\theh$-lines 
$\ell^+(P)$ and $\ell^-(P)$ such that
$$
\intfNS{\ell^+(P)}{ \ell^-(P)}=3.
$$
All $\theh$-lines  on $X$ are obtained as $\ell^{\pm }(P)$ with $P\in B_F(\F_{25})$.
In particular, 
the number of $\theh$-lines on $X$ is $252$.
We put 
$$
P_0:=[0:1:1+\srt]\in B_F(\F_{25}) \quand \ell^+(P_0):=\{x^3-w=0, y+(1-\srt)z=0\}.
$$
For $P\in B_F(\F_{25})\setminus\{P_0\}$,
we choose the sign of $\ell^\pm(P)$ in such a way that
$$
\intfNS{\ell^+(P)}{\ell^+(P_0)}=1\qquad
(\textrm{and hence $\intfNS{\ell^-(P)}{ \ell^+(P_0)}=0$}).
$$
From among these $\theh$-lines,
we choose the $22$ curves $\ell_1, \dots, \ell_{22}$ in Table~\ref{table:22hlines}.
\begin{table}
\begin{tabular}{p{\NSbasisleng}p{\NSbasisleng}}
\hhline{1}{+}{[0:1:1+\sqrt {2}]
}{ \left\{ y+4\,z\sqrt {2}+z=0,{x}^{3}+4\,w=0 \right\} 
} 
 & 
\hhline{2}{-}{[0:1:1+\sqrt {2}]
}{ \left\{ y+4\,z\sqrt {2}+z=0,w+{x}^{3}=0 \right\} 
} 
\\ 
\hhline{3}{+}{[0:1:1+4\,\sqrt {2}]
}{ \left\{ {x}^{3}+4\,w=0,y+z\sqrt {2}+z=0 \right\} 
} 
 & 
\hhline{4}{+}{[0:1:2]
}{ \left\{ {x}^{3}+4\,w=0,y+2\,z=0 \right\} 
} 
\\ 
\hhline{5}{+}{[0:1:3]
}{ \left\{ {x}^{3}+4\,w=0,y+3\,z=0 \right\} 
} 
 & 
\hhline{6}{+}{[0:1:4+\sqrt {2}]
}{ \left\{ {x}^{3}+4\,w=0,y+4\,z+4\,z\sqrt {2}=0 \right\} 
} 
\\ 
\hhline{7}{+}{[1:0:1+\sqrt {2}]
}{ \left\{ x+4\,z\sqrt {2}+z=0,{y}^{3}+4\,w=0 \right\} 
} 
 & 
\hhline{8}{+}{[1:0:1+4\,\sqrt {2}]
}{ \left\{ {y}^{3}+4\,w=0,x+z\sqrt {2}+z=0 \right\} 
} 
\\ 
\hhline{9}{+}{[1:0:2]
}{ \left\{ x+2\,z=0,w+{y}^{3}=0 \right\} 
} 
 & 
\hhline{10}{+}{[1:0:4+\sqrt {2}]
}{ \left\{ w+{y}^{3}=0,x+4\,z+4\,z\sqrt {2}=0 \right\} 
} 
\\ 
\hhline{11}{+}{[1:\sqrt {2}:1]
}{ \left\{ x+4\,\sqrt {2}y+z=0,{y}^{3}+2\,z\sqrt {2}{y}^{2}+3\,w+{z}^{2}
y+3\,{z}^{3}\sqrt {2}=0 \right\} 
} 
 & 
\hhline{12}{-}{[1:\sqrt {2}:2+2\,\sqrt {2}]
}{ \left\{ x+4\,\sqrt {2}y+3\,z\sqrt {2}+2\,z=0,3\,{z}^{3}\sqrt {2}+2\,{
z}^{2}y+2\,{z}^{2}\sqrt {2}y+3\,w+{y}^{3}+2\,z{y}^{2}+4\,z\sqrt {2}{y}
^{2}=0 \right\} 
} 
\\ 
\hhline{13}{-}{[1:\sqrt {2}:2+3\,\sqrt {2}]
}{ \left\{ 3\,{z}^{3}\sqrt {2}+2\,{z}^{2}y+3\,{z}^{2}\sqrt {2}y+3\,w+{y}
^{3}+3\,z{y}^{2}+4\,z\sqrt {2}{y}^{2}=0,x+4\,\sqrt {2}y+2\,z\sqrt {2}+
2\,z=0 \right\} 
} 
 & 
\hhline{14}{+}{[1:\sqrt {2}:3+2\,\sqrt {2}]
}{ \left\{ x+4\,\sqrt {2}y+3\,z+3\,z\sqrt {2}=0,2\,{z}^{3}\sqrt {2}+2\,{
z}^{2}y+3\,{z}^{2}\sqrt {2}y+2\,w+{y}^{3}+2\,z{y}^{2}+z\sqrt {2}{y}^{2
}=0 \right\} 
} 
\\ 
\hhline{15}{-}{[1:\sqrt {2}:3+3\,\sqrt {2}]
}{ \left\{ x+4\,\sqrt {2}y+2\,z\sqrt {2}+3\,z=0,2\,{z}^{3}\sqrt {2}+2\,{
z}^{2}y+2\,{z}^{2}\sqrt {2}y+2\,w+{y}^{3}+3\,z{y}^{2}+z\sqrt {2}{y}^{2
}=0 \right\} 
} 
 & 
\hhline{16}{+}{[1:2\,\sqrt {2}:2\,\sqrt {2}]
}{ \left\{ {y}^{3}+4\,{z}^{3}+2\,z{y}^{2}+4\,\sqrt {2}w+3\,{z}^{2}y=0,x+
3\,\sqrt {2}y+3\,z\sqrt {2}=0 \right\} 
} 
\\ 
\hhline{17}{+}{[1:2\,\sqrt {2}:3\,\sqrt {2}]
}{ \left\{ x+3\,\sqrt {2}y+2\,z\sqrt {2}=0,{y}^{3}+{z}^{3}+3\,z{y}^{2}+
\sqrt {2}w+3\,{z}^{2}y=0 \right\} 
} 
 & 
\hhline{18}{-}{[1:2\,\sqrt {2}:2+\sqrt {2}]
}{ \left\{ x+3\,\sqrt {2}y+2\,z+4\,z\sqrt {2}=0,{z}^{3}+{z}^{2}y+{z}^{2}
\sqrt {2}y+\sqrt {2}w+{y}^{3}+z{y}^{2}+4\,z\sqrt {2}{y}^{2}=0
 \right\} 
} 
\\ 
\hhline{19}{+}{[1:2\,\sqrt {2}:2+4\,\sqrt {2}]
}{ \left\{ x+3\,\sqrt {2}y+2\,z+z\sqrt {2}=0,4\,{z}^{3}+{z}^{2}y+4\,{z}^
{2}\sqrt {2}y+4\,\sqrt {2}w+{y}^{3}+4\,z{y}^{2}+4\,z\sqrt {2}{y}^{2}=0
 \right\} 
} 
 & 
\hhline{20}{+}{[1:2\,\sqrt {2}:3+\sqrt {2}]
}{ \left\{ x+3\,\sqrt {2}y+3\,z+4\,z\sqrt {2}=0,{z}^{3}+{z}^{2}y+4\,{z}^
{2}\sqrt {2}y+\sqrt {2}w+{y}^{3}+z{y}^{2}+z\sqrt {2}{y}^{2}=0
 \right\} 
} 
\\ 
\hhline{21}{+}{[1:1+\sqrt {2}:0]
}{ \left\{ x+y+4\,\sqrt {2}y=0,w+{z}^{3}=0 \right\} 
} 
 & 
\hhline{22}{+}{[1:1+3\,\sqrt {2}:1]
}{ \left\{ x+y+2\,\sqrt {2}y+z=0,2\,{z}^{3}\sqrt {2}+2\,{z}^{2}y+3\,{z}^
{2}\sqrt {2}y+3\,w+{y}^{3}+2\,z{y}^{2}+z\sqrt {2}{y}^{2}=0 \right\} 
} 
\\ 
\end{tabular}
%
%
\caption{Basis of $\NS(X)$}\label{table:22hlines}
\end{table}
Then their intersection matrix $M_{\NS}$ is calculated
to have  $\det \theNS=-25$.
(See~\cite{shimadaWWW} for the explicit presentation of $M_{\NS}$.)
Hence the classes of $\ell_1, \dots, \ell_{22}$
form a $\Z$-basis of $\NS(X)$.
We fix this basis throughout the paper.
Each element of $\NS(X)$
is written as a \emph{row} vector 
with respect to this basis.
In particular,
the orthogonal group  $\mathrm{O}(\NS(X))$ of the lattice $\NS(X)$ acts on $\NS(X)$ from the right.
Since $\theh=[\ell^+(P)]+[\ell^-(P)]$ for any  $P\in B_F(\F_{25})$,
we have
\begin{equation}\label{eq:theh}
\theh=[1,1,0,0,0,0,0,0,0,0,0,0,0,0,0,0, 0,0,0,0,0,0].
\end{equation}
We calculate the vector representations of the classes  of 
all $\theh$-lines.
\begin{example}
The class of the $\theh$-line
$\ell^-([1:4+4\,\sqrt {2}:0])$ 
is 
$$
[-4,-6,3,1,1,2,1,-1,2,1,1,4,1,0,-3,0,2,-1,3,-1,-2,-3].
$$
\end{example}
From the action of $\PGU_3(\F_{25})$ on the set $B_F(\F_{25})$,
we can calculate 
the action of  $\Aut(X, \theh)$
on the set of $\theh$-lines.
Using this permutation representation,
we can write explicitly
the linear representation
\begin{equation}\label{eq:rep}
\Aut (X, \theh)\to\set{T\in \GL_{22}(\Z)}{T M_{\NS} \transpose{\hskip .8pt T} =M_{\NS}}
\cong \mathrm{O}(\NS(X)).
\end{equation}
This representation is faithful (see Proposition~3 in Section~8 of~\cite{MR633161}).
\begin{remark}\label{rem:storerep}
The representation~\eqref{eq:rep} is encoded
as follows.
We number the $\theh$-lines as
$\ell_1, \dots, \ell_{22}, \ell_{23}, \dots, \ell_{252}$
once and for all.
Then each $\gamma\in \Aut(X, \theh)$ is labelled by a list of 
$22$ integers $[n_\gamma(1), \dots, n_\gamma(22)]$ in such a way that 
the image $\ell_i^\gamma$ of $\ell_i$ by $\gamma$
is equal to $\ell_{n_\gamma(i)}$ for $i=1, \dots, 22$.
Then the action of $\gamma$ on $\NS(X)$ is given by 
$v\mapsto v T_{\gamma}$, where $T_{\gamma}$ is the $22\times 22$ matrix 
whose $i$th row vector is $[\ell_{n_\gamma(i)}]$.
\end{remark}
The Galois group $\Gal (\F_{25}/\F_{5})$ also acts on the set of $\theh$-lines
by the Frobenius action on $X_F$.
We denote by  $\Gamma_{\NS}$ the matrix 
that represents
this Frobenius conjugate action $v\mapsto \bar v=v\Gamma_{\NS}$ on $\NS(X)$
with respect to the basis $\ell_1, \dots, \ell_{22}$.
See~\cite{shimadaWWW} for the explicit presentation of $\Gamma_{\NS}$.
\section{Algorithms for lattices}\label{sec:algorithm}
\subsection{An algorithm for a positive quadratic triple}\label{subsec:QT}
By a \emph{quadratic triple} of $n$-variables, we
mean a triple $[Q, L, c]$,
where $Q$ is an $n\times n$  symmetric matrix with entries in $\Q$,
$L$ is a column vector of length $n$ with entries in $\Q$, and $c$ is a rational number.
An element of $\R^n$ is written as a row vector $\vx=[x_1, \dots, x_n]$.
The  \emph{inhomogeneous quadratic function}
$q_{\QT} : \Q^n\to \Q$
associated with a  quadratic triple $\QT=[Q, L, c]$
is defined by 
$$
q_{\QT} (\vx):=\vx\, Q \,\transpose{\vx} + 2\, \vx\, L + c.
$$
We say that  $\QT=[Q, L, c]$ 
and   $q_{\QT}$ are \emph{positive}
or~\emph{negative}
according to whether the symmetric matrix  $Q$ is positive-definite or negative-definite.
\par
\medskip
Let $\QT=[Q, L, c]$ be a positive quadratic triple
of $n$-variables.  
In this section, we describe an algorithm to calculate the finite set
$$
E(\QT):=\set{\vx\in \Z^n}{q_{\QT}(\vx)\le 0}.
$$
Suppose that $\QT=[Q, L, c]$ is written as follows:
$$
\mystruthd{35pt}{30pt}
Q= \left[ 
\begin {array}{ccc|c} 
&&&\\
&Q\sprime&&\vp\sprime\\
&&&\\
\hline
&\transpose{\vp\sprime}\mystruthd{12pt}{0pt}&&r\sprime
\end {array} 
\right]
=\left[ 
\begin {array}{c|ccc} r\spprime&&\transpose{\vp\spprime}\mystruthd{0pt}{5pt}&\\
\hline
&&&\\
\vp\spprime&&Q\spprime&\\
&&&
\end {array} 
\right],
\quad
L=\left[
\begin {array}{c}
\\
L\sprime\\
\\
\hline
m\sprime \mystruthd{12pt}{0pt}
\end{array}
\right]
=\left[
\begin {array}{c}
m\spprime \mystruthd{0pt}{5pt}\\
\hline
\\
L\spprime\\
\\
\end{array}
\right],
$$
where $Q\sprime$ and $Q\spprime$ are square matrices of size $n-1$,
$\vp\sprime$, $\vp\spprime$, $L\sprime$ and  $L\spprime$ are column vectors of length $n-1$,
and $r\sprime$, $r\spprime$, $m\sprime$ and $m\spprime$ are rational numbers.
Note that, since $Q$ is positive-definite, we have $r\sprime>0$ and $r\spprime>0$.
We define a positive quadratic triple $\pr (\QT)$ of $(n-1)$-variables by
$$
\pr (\QT):=\left[\;
Q\sprime-\frac{1}{r\sprime}(\vp\sprime\transpose{\vp\sprime}),\;
L\sprime-\frac{m\sprime}{r\sprime}\vp\sprime,\;
c-\frac{m\sp{\prime 2}}{r\sprime}
\;\right].
$$
Then, for each $t\in \R$, the compact subset
$\shortset{\vx\in \R^n}{q_{\QT} (\vx)\le t}$
of $\R^n$ is mapped by the projection
$[x_1, \dots, x_n]\mapsto [x_1, \dots, x_{n-1}]$
to the compact subset
$$
\set{\vy\in \R^{n-1}}{q_{\pr(\QT)} (\vy)\le t}
$$ 
of $\R^{n-1}$.
For $a\in \Q$, we define a  positive quadratic  triple $\iota^*(a, \QT)$ of $(n-1)$-variables by
$$
\iota^*(a, \QT):=
[\;Q\spprime,\; a\,\vp\spprime+L\spprime, \; a^2\, r\spprime +2\, a\, m\spprime +c\;],
$$
and, for $\va=[a_1, \dots, a_m]\in \Q^m$ with $m<n$, 
we define a positive quadratic triple $\iota^*(\va, \QT)$ of $(n-m)$-variables by
$$
\QT^{0}:=\QT, \quad \QT^{\nu+1}:=\iota^*(a_{\nu+1}, \QT^{\nu}) 
\;\; (\nu=0, \dots, m-1), \quad \iota^*(\va, \QT):=\QT^{m}.
$$
Then the positive inhomogeneous quadratic function 
$q_{\iota^*({\smallva}, \QT)}: \Q^{n-m}\to \Q$
is equal to the composite $q_{\QT}\circ \iota_{{\smallva}}$,
where $\iota_{{\smallva}}$ is  the inclusion $\Q^{n-m}\inj \Q^n$
given by
$$
[y_1, \dots, y_{n-m}]\mapsto [a_1, \dots, a_m, y_1, \dots, y_{n-m}].
$$
Suppose that  $\va=[a_1, \dots, a_{n-1}]\in E(\pr(\QT))$
is given. Then the positive quadratic triple $\iota^*(\va, \QT)$
is of  \emph{one} variable, and 
the fiber of the projection $E(\QT)\to E(\pr(\QT))$
over $\va$ is equal to
$$
\set{[a_1, \dots, a_{n-1}, b]}{b \in E(\iota^*(\va, \QT))}.
$$
Since 
 $E(\iota^*(\va, \QT))$ is easily calculated,
we can obtain $E(\QT)$ if we know $E(\pr(\QT))$.
Using this idea iteratively,
we carry out  the following computation.
\par
\medskip
Starting from  the given positive quadratic triple 
$\QT_n^0:=\QT$ of $n$-variables,
we compute positive quadratic triples 
$\QT^0_\mu$ of $\mu$-variables by
$$
\QT_{\mu}^0:=\pr (\QT^0_{\mu+1})
\qquad (\mu=n-1, \dots, 1).
$$
We prepare an empty set $E:=\{\;\}$.
We then write a program $\QB (\nu, \va)$ that takes 
an integer $\nu\le n+1$ and
a vector $\va=[a_1, \dots, a_{\nu-1}]\in \Z^{\nu-1}$ as input,
and carries out the  task below.
Note that, when $\QB(\nu, \va)$ starts with $\nu>1$,
$\va$ is an element of $E(\QT_{\nu-1}^0)$,
and for $\mu>\nu-1$,
$\QT_{\mu}^{\nu-1}$ is the positive quadratic triple $\iota^*(\va, \QT_{\mu}^0)$
of $(\mu-\nu+1)$-variables.
In particular, $\QT_{\nu}^{\nu-1}$ is of one variable.
\par
\medskip
The task of $\QB (\nu, \va)$:
\begin{itemize}
\item[(1)] If $\nu=n+1$, then $\QB (\nu, \va)$ appends $\va$ to the set $E$.
\item[(2)] If $\nu\le n$, then the program $\QB (\nu, \va)$ 
\begin{itemize}
\item[(2-i)]  calculates the set $E(\QT_{\nu}^{\nu-1})=\{b_1, \dots, b_N\}$, and
\item[(2-ii)] for each $b_i \in E(\QT_{\nu}^{\nu-1})$, 
\begin{itemize}
\item[(2-ii-a)] computes $\QT_\mu^\nu:=\iota^*(b_i, \QT_\mu^{\nu-1})$ for $\mu=\nu+1, \dots, n$, and
\item[(2-ii-b)] proceeds to execute $\QB(\nu+1, [a_1, \dots, a_{\nu-1}, b_i])$.
\end{itemize}
\end{itemize}
\end{itemize}
We execute $\QB(1, [\;])$.
Since each $E(\QT_{\nu}^{\nu-1})$ is finite,
this program certainly terminates.
When the whole computation halts,
the set $E$ is equal to $E(\QT)$.
\subsection{An application to hyperbolic lattices I}\label{subsec:applicationI}
Changing the sign, we can apply the algorithm 
above to \emph{negative} inhomogeneous 
quadratic functions.
\par
\medskip
Suppose that $N$ is a hyperbolic lattice of rank $n$, that is, the signature of $\intfphantom{N}$ is $(1, n-1)$.
Let $\shortset{[v_i,a_i]}{i=1, \dots, k}$ be 
a finite set of pairs of $v_i\in N$ and $a_i\in \Z$
such that $\intf{N}{v_i}{v_i}>0 $ for at least one $i$, 
and let $d$ be an integer.
We can calculate the set
\begin{equation}\label{eq:cutlattice}
\set{x\in N}{\intf{N}{x}{v_i}=a_i\;\;\textrm{for}\;\; i=1, \dots, k,
\;\;\textrm{and}\;\; \intf{N}{x}{x} = d}
\end{equation}
by the following method.
We put
$$
M:=\set{x\in N}{\intf{N}{x}{v_i}=a_i\;\;\textrm{for}\;\; i=1, \dots, k}.
$$
%
It is  easy to determine whether $M$ is empty or not.
Suppose that $M\ne \emptyset$.
By choosing a point $c\in M$ as an origin,
we can regard $M$ as a free $\Z$-module of finite rank.
By the assumption on  $v_i$,
the restriction of $\intfphantom{N}$ to $M\subset N$ defines 
a negative  inhomogeneous  quadratic function on $M$.
Therefore we can calculate the set~\eqref{eq:cutlattice}
by the algorithm in Section~\ref{subsec:QT}.
\subsection{An application to hyperbolic lattices II}\label{subsec:applicationII}
Let $N$ be as  in the previous subsection.
Suppose that we are given vectors $h, v\in N$
satisfying
\begin{equation}\label{eq:assumphv}
\intf{N}{h}{h}>0,
\quad
\intf{N}{v}{v}>0,
\quad
\intf{N}{h}{v}>0.
\end{equation}
We describe an algorithm
that calculates,
for a given integer $d$,
the set
\begin{equation}\label{eq:setS}
S:=\set{r\in N}{\intf{N}{r}{h}>0, \;\intf{N}{r}{v}<0, \;\intf{N}{r}{r}=d}.
\end{equation}
Consider the orthogonal direct-sum decomposition
$N\tensor \R=\gen{h}\oplus \gen{h}\sperp$.
We denote the second projection by
$\pr_2: N\tensor \R\to \gen{h}\sperp$,   
and put
$$
W:=\pr_2(N),
$$
which is a free $\Z$-module of rank $n-1$ such that $W\tensor \R= \gen{h}\sperp$.
Note that $W\subset N\tensor\Q$.
We denote by
$$
\intfphantom{W}: W\times W\to \Q
$$
the restriction of $\intfphantom{N}$ to $W$.
Suppose that
$x\in N\tensor \R$ satisfies $\intf{N}{h}{x}\ne 0$ and $\intf{N}{x}{x}> 0$.
Then the composite
\begin{equation}\label{eq:compositexh}
\gen{x}\sperp \inj N\tensor \R \maprightsp{\pr_2} \gen{h}\sperp
\end{equation}
is an isomorphism of $\R$-vector spaces.
Let $\varphi_x : \gen{h}\sperp \;\isom\; \gen{x}\sperp$
denote the inverse of the isomorphism~\eqref{eq:compositexh}, that is,
$$
\varphi_x (y)=y-\frac{\intf{N}{y}{x}}{\intf{N}{h}{x}} h\quad\textrm{for}\quad y\in\gen{h}\sperp.
$$
We then define 
$f_x : \gen{h}\sperp\to \R$
by
$$
f_x(y):=\intf{N}{\varphi_x(y)}{\varphi_x(y)}=
\intf{W}{y}{y}+\frac{\intf{N}{y}{x}^2}{\intf{N}{h}{x}^2}\,\intf{N}{h}{h}
\;\;\;\;\textrm{for}\;\;\;\; y\in\gen{h}\sperp=W\tensor \R.
$$
Since $\intf{N}{x}{x}>0$, the real quadratic form $\intfphantom{N}$ 
restricted to $\gen{x}\sperp$
is negative-definite, and hence so is $f_x$.
By the condition~\eqref{eq:assumphv},
we see that $f_{h+tv}$ is negative-definite on $W\tensor \R$ for 
any $t\in \R_{\ge 0}\cup\{\infty\}$.
(Here we understand that $f_{h+\infty v}=f_v$.)
\par
\medskip
For simplicity, we put
$$
c_h:=\intf{N}{h}{h}, 
\quad
c_v:=\intf{N}{h}{v},
\quad
v_W:=\pr_2(v)\in W.
$$
Let  $x\sprime$ be a vector in  $\gen{h}\sperp=W\tensor\R$.
Since $v-v_W\in \gen{h}$, 
we have  
\begin{equation}\label{eq:fhtv}
f_{h+tv}(x\sprime)
=\intf{W}{x\sprime}{x\sprime}+\frac{t^2 \intf{W}{x\sprime}{v_W}^2}{(c_h +t c_v)^2} c_h.
\end{equation}
By~\eqref{eq:assumphv},
we have $c_h/c_v>0$, and hence, 
for a fixed $x\sprime\in \gen{h}\sperp$,
$f_{h+tv}(x\sprime)$ is a non-decreasing function with respect to $t\in \R_{\ge 0}$
bounded from above by
$$
f_{h+\infty v}(x\sprime)=\intf{W}{x\sprime}{x\sprime}+\frac{\intf{W}{x\sprime}{v_W}^2}{c_v^2} c_h.
$$
Note that  $f_{h+\infty v}$  restricted to  $W\subset W\tensor \R$
is  $\Q$-valued,
and hence $f_{h+\infty v}$ is a  negative inhomogeneous quadratic function on $W\tensor\Q$.
Applying the algorithm in Section~\ref{subsec:QT} to 
$f_{h+\infty v}$, 
we can calculate the finite set
$$
S_W:=\set{r\sprime \in W}{f_{h+\infty v}(r\sprime)\ge d},
$$
where $d$ is the  integer given as input.
\par
\medskip
Suppose that
$r$ is an element of the set $S$ in~\eqref{eq:setS}.
We put
$$
t_r:=-\frac{\intf{N}{r}{h}}{\intf{N}{r}{v}}\;\;\in\;\;\R_{>0}.
$$
Then we have $r\in \gen{h+t_r v}\sperp$.
We put $r\sprime:=\pr_2(r)\in W$.
Since $\varphi_{h+t_r v}(r\sprime)=r$, we have 
$$
d=\intf{N}{r}{r}=f_{h+t_r v}(r\sprime)\le f_{h+\infty v}(r\sprime).
$$
Therefore $r\sprime\in S_W$ holds.
Let $\rho\in \Q$ be the rational number such that
$r=\rho h+r\sprime$.
Since $\intf{N}{r}{r}=d$,  $\intf{N}{r\sprime}{h}= 0$ and $\intf{N}{r}{h}> 0$,
we have  
\begin{equation}\label{eq:rho}
\rho=\frac{\intf{N}{r}{h}}{c_h}=\sqrt{\frac{d-\intf{W}{r\sprime}{r\sprime}}{c_h}}.
\end{equation}
The right-hand side  of~\eqref{eq:rho} can be calculated if we know $r\sprime\in W$.
\par
\medskip
Therefore we obtain $S$ from $S_W$ by the following method.
First we set $S=\{\phantom{a}\}$.
For each $r\sprime\in S_W$,
we put
$$
\rho\sprime:=\sqrt{\frac{d-\intf{W}{r\sprime}{r\sprime}}{c_h}}
\quand
r:=\rho\sprime h+r\sprime\in N\tensor \R.
$$
We then  determine whether $r$ is contained in $N$ or not.
(If $\rho\sprime\notin \Q$, then we obviously have $r\notin N$.)
If $r\in N$, $\intf{N}{r}{h}>0$  and   $\intf{N}{r}{v}<0$, 
we append $r$ to $S$.
When this calculation  is done for all $r\sprime\in S_W$,
the set $S$ is equal to the set~\eqref{eq:setS}.
\section{Geometric applications}\label{sec:geometricappli}
We apply the algorithms above to the hyperbolic lattice $\NS(X)$.
%
\subsection{Polarizations}\label{subsec:polarizations}
If  $v\in \NS(X)$ is a polarization,
then we necessarily have $\intfNS{v}{v}>0$ and $\intfNS{v}{\theh}>0$.
It is well-known that 
the nef cone of $X$ is bounded by the hyperplanes perpendicular to classes of $(-2)$-curves
(see Section 3 of~\cite{MR633161}, for example).
If $v$ with $\intfNS{v}{v}>0$ is nef, then Proposition 0.1 of~\cite{MR1260944} gives a criterion for 
$v$ to be a polarization.
Thus we obtain the following:
\begin{proposition}\label{prop:polarization}
Suppose that a vector  $v\in \NS(X)$ satisfies $\intfNS{v}{v}>0$ and $\intfNS{v}{\theh}>0$.
Consider  the sets
\begin{eqnarray*}
S_1 &:=& \set{r\in\NS(X)}{\intfNS{r}{r}=-2, \;\;\intfNS{r}{\theh}>0,  \;\;\intfNS{r}{v}<0} \quand\\
S_2 &:=& \set{e\in\NS(X)}{\intfNS{e}{e}=0,  \;\;\intfNS{e}{v}=1}.
\end{eqnarray*}
Then $v$ is  nef if and only if $S_1=\emptyset$.
If $v$ is nef,
then $v$ is a polarization if and only if $S_2=\emptyset$.
\end{proposition}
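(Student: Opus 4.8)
The plan is to prove the two equivalences separately, using the general theory of $K3$ surfaces together with the two algorithmic devices from Section~\ref{sec:algorithm}, which compute exactly the sets $S_1$ and $S_2$. The hypotheses $\intfNS{v}{v}>0$ and $\intfNS{v}{\theh}>0$ place $v$ in the same connected component of the positive cone as the ample class $\theh$, so throughout I may regard $v$ as lying in the ``forward'' cone. Recall that a vector $v$ in the positive cone is nef precisely when $\intfNS{v}{C}\ge 0$ for every $(-2)$-curve $C$, since the nef cone is bounded by the walls perpendicular to classes of $(-2)$-curves (Section~3 of~\cite{MR633161}).

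First I would establish the nef criterion $v$ is nef $\iff S_1=\emptyset$. For the forward direction, suppose $S_1\ne\emptyset$ and take $r\in S_1$; then $\intfNS{r}{r}=-2$, so by Riemann--Roch on the $K3$ surface $X$ either $r$ or $-r$ is effective. The condition $\intfNS{r}{\theh}>0$ forces $r$ itself to be effective (since $\theh$ is ample, effective classes pair positively with it). Writing $r$ as a sum of classes of $(-2)$-curves, the inequality $\intfNS{r}{v}<0$ guarantees that some irreducible $(-2)$-curve $C$ in this decomposition satisfies $\intfNS{C}{v}<0$, whence $v$ is not nef. Conversely, if $v$ is not nef, there is a $(-2)$-curve $C$ with $\intfNS{C}{v}<0$; since $C$ is effective we have $\intfNS{C}{\theh}>0$, and $\intfNS{C}{C}=-2$, so $C\in S_1$.

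Next I would handle the polarization criterion under the standing assumption that $v$ is nef. The key input is Proposition~0.1 of~\cite{MR1260944}, which for a nef class $v$ with $\intfNS{v}{v}>0$ characterizes when $|\LLL_v|$ has no fixed component and is therefore a polarization in the sense of the paper: the obstruction is precisely the existence of a class $e$ with $\intfNS{e}{e}=0$ and $\intfNS{e}{v}=1$, i.e.\ an elliptic (genus-one) pencil meeting $v$ with multiplicity one, which would split off a fixed part. Thus $v$ is a polarization if and only if no such $e$ exists, that is, if and only if $S_2=\emptyset$. Here I would cite~\cite{MR1260944} for the equivalence rather than reprove it.

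The main obstacle is the forward direction of the nef criterion, specifically the step that passes from an arbitrary effective class $r\in S_1$ to an \emph{irreducible} $(-2)$-curve $C$ witnessing non-nefness: one must verify that when $r=\sum n_i C_i$ is written as a non-negative combination of irreducible $(-2)$-curves, the inequality $\intfNS{r}{v}<0$ indeed forces $\intfNS{C_i}{v}<0$ for at least one $i$. This is immediate from linearity once the effectivity and the decomposition into $(-2)$-curves are in hand, but it relies on the structural fact that on a $K3$ surface every effective class of self-intersection $-2$ with positive $\theh$-degree decomposes into smooth rational $(-2)$-curves, which itself rests on Riemann--Roch and the ampleness of $\theh$; I would make sure to invoke these cleanly rather than recompute them.
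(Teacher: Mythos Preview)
Your approach matches the paper's exactly: the paper does not give a self-contained proof but simply invokes the fact that the nef cone is bounded by walls perpendicular to $(-2)$-curve classes (citing~\cite{MR633161}) and then Proposition~0.1 of~\cite{MR1260944} for the polarization criterion. Your write-up just fills in the standard details behind those citations.

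One small imprecision to clean up: in the forward direction of the nef criterion you write $r$ as a sum of irreducible $(-2)$-curves, and in your ``obstacle'' paragraph you claim that every effective class of self-intersection $-2$ decomposes into smooth rational curves. This is not true in general (e.g.\ $r$ could be $E+C$ with $E^2=0$, $C^2=-2$, $E\cdot C=0$). Fortunately you do not need it. Simply write the effective class $r$ as $\sum n_i C_i$ with $C_i$ irreducible; since $\intfNS{r}{v}<0$, some $C_i$ satisfies $\intfNS{C_i}{v}<0$, and that alone shows $v$ is not nef. If you want $C_i$ to be a $(-2)$-curve (so that it literally lies in $S_1$), argue instead via the positive cone: since $v$ lies in the same component of $\{x^2>0\}$ as $\theh$, any effective $C_i$ with $C_i^2\ge 0$ lies in the closure of that cone and hence satisfies $\intfNS{C_i}{v}\ge 0$; thus the offending $C_i$ has $C_i^2<0$, so $C_i^2=-2$ by adjunction. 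This is the cleaner justification, and it is exactly what underlies the cited fact from~\cite{MR633161}.
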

The sets $S_1$ and  $S_2$ can be calculated  by the algorithms 
in Sections~\ref{subsec:applicationII} and~\ref{subsec:applicationI},
respectively.
Hence 
Proposition~\ref{prop:polarization} enables us to determine whether 
a given vector $v\in \NS(X)$ is a polarization or not.
\subsection{$h$-Exceptional curves}\label{subsec:hexceptional}
Let $h\in \NS(X)$ be a polarization of arbitrary degree.
A $(-2)$-curve $C$ on $X$ is called \emph{$h$-exceptional}
if $\Phi_h$ contracts $C$.
The set $\Exc(h)\subset \NS(X)$ of the classes of $h$-exceptional curves is 
calculated by the following algorithm.
We calculate the finite set
$$
R:=\set{r\in \NS(X)}{\intfNS{r}{r}=-2,  \;\;\intfNS{r}{h}=0}
$$
by the algorithm in Section~\ref{subsec:applicationI}, and 
classify the elements of $R$ by the degree with 
respect to the ample class $\theh$ as follows:
$$
R[m]:=\set{r\in R}{\intfNS{r}{\theh}=m}\quad\textrm{and}
\quad
R^+:=\bigcup_{m>0} R[m].
$$
We say that $r\in R^+$ is \emph{indecomposable} if there are no vectors $r_1, \dots, r_k\in R^+$
with $k>1$ 
such that $r=r_1+\cdots+r_k$.
Since each $R[m]$ is finite,
we can determine whether a given vector $r\in R^+$ is indecomposable or not.
It is obvious that $r\in R^+$ is contained in $\Exc(h)$ if and only if 
$r$ is  indecomposable.
\subsection{$h$-Lines}\label{subsec:hlines}
Let $h\in \NS(X)$ be a polarization of arbitrary degree.
A $(-2)$-curve $C$ on $X$ is called an \emph{$h$-line}
if $\Phi_h$ maps  $C$ to a line isomorphically.
The set $\Lin(h)\subset \NS(X)$ of the classes of $h$-lines is
calculated by the following algorithm.
We calculate the finite sets
\begin{eqnarray*}
&&L:=\set{r\in \NS(X)}{\intfNS{r}{r}=-2,  \;\;\intfNS{r}{h}=1}, \\
&&L[m]:=\set{r\in L}{\intfNS{r}{\theh}=m},
\quad
L^+:=\bigcup_{m>0} L[m].
\end{eqnarray*}
It is obvious that $\Lin(h)\subset L^+$.
If $r\in L^+$,
then we see that $r$ is the class of an effective divisor $D$,
that exactly one irreducible component $D_0$ of $D$ is an $h$-line,
and that $D-D_0$ is a finite sum of $h$-exceptional curves.
Hence $r\in L^+$ is contained in $\Lin(h)$ if and only if 
there are no $r\sprime\in L[m\sprime]$ with $m\sprime< \intfNS{r}{\theh}$ and  
$r_1,  \dots, r_k\in\Exc(h)$ 
with $k\ge 1$ such that
$r=r\sprime+r_1+\dots+r_k$.
 Since each of $L[m\sprime]$ and $\Exc(h)$ are  finite,
 we can determine the subset $\Lin(h)\subset L^+$.
\section{Explicit defining equations}\label{sec:morphism}
We identify $X$ with $X_F$ by the isomorphism $\phi_{F} : X\isom X_F$,
so that, for a polarization $h\in \PPP_2(X)$, 
we consider  $\Phi_h: X\to \P^2$ and  $\phi_h: X\to X_h$ as morphisms from $X_F$.
In this section, we describe a method to write the morphisms 
$\Phi_h$ and $\phi_h$ as  lists of rational functions 
on $X_F$ over $\F_{25}$.
%
\subsection{The global sections of a line bundle}\label{subsec:linebundle}
Let $H_{\infty}\subset X_F$ denote  the hyperplane section
defined by $z=0$ in~\eqref{eq:XF}.
We use the affine coordinates $(w, x, y)$ of $\P(3,1,1,1)$ with $z=1$,
and put
$$
F:=w^2-x^6-y^6-1\;\;\in\;\;  \F_{25} [w,x,y].
$$
For any $g\in \F_{25} [w,x,y]$,
there exists a unique polynomial $\bar{g}^F$
of the form $w f + h$ with $f, h\in \F_{25} [x,y]$
such that
$$
g\equiv  \bar{g}^F\bmod(F)\;\;\;\textrm{in}\;\;\; \F_{25}[w,x,y].
$$
We call $\bar{g}^F$ the \emph{normal form} of $g$.
Let $m$ be an integer.
By identifying the line bundle $\LLL_{m\theh}\to X$ 
with the invertible sheaf $\OOO_{X_F}(m H_{\infty})$,
the vector space $\Gamma(X, \LLL_{m\theh})$ of the global sections of $\LLL_{m\theh}$ 
defined over $\F_{25}$ is naturally identified with
the vector subspace
$$
V_m:=\set{w f + h}{f, h\in \F_{25} [x,y], \;\;\deg f\le m-3,\;\; \deg h\le m}
$$
of $\F_{25} [w,x,y]$.
%
%
Recall that all $\theh$-lines are defined over $\F_{25}$, 
and that no $\theh$-lines are contained in $H_{\infty}$.
We have indexed the $\theh$-lines 
as $\ell_1, \dots, \ell_{252}$ in Remark~\ref{rem:storerep}.
For $j=1, \dots, 252$, we denote by
$$
I_{j}\subset \F_{25} [w,x,y]
$$
the inhomogeneous  ideal defining  
$\ell_j$ in $\P(3,1,1,1)$, and put
$$
I_{j}\spar{\nu}:=I_{j}^\nu + (F) \subset \F_{25} [w,x,y] \quad\textrm{for}\;\; \nu\in \Z_{>0}.
$$
\par
\medskip
We describe an algorithm that takes a vector $v\in \NS(X)$ as input,
and calculates the vector space $\Gamma(X, \LLL_{v})$ of the global sections
of the corresponding line bundle $\LLL_{v}\to X$ defined over $\F_{25}$.
Using the $\Z$-basis $[\ell_1], \dots, [\ell_{22}]$ of $\NS(X)$,
$v$ is uniquely written as 
$$
v=\sum_{i\in J^+} a_i [\ell_i]-\sum_{j\in J^-} b_j [\ell_j],  
$$
where $J^+$ and $J^-$ are disjoint subsets of $\{1,\dots, 22\}$,
and $a_i, b_j$ are positive integers.
Let $i\sprime$ be the index of the $\theh$-line $\ell_{i\sprime}$
that is the image of $\ell_i$ by 
the deck-transformation of  $X_F$ over $\P^2$.
Since $[\ell_i]+[\ell_{i\sprime}]=\theh$ for any $i$,
we have
\begin{equation*} 
v=d\sprime (v) \theh- \sum_{i\in J^+} a_i [\ell_{i\sprime}]-\sum_{j\in J^-} b_j [\ell_j],
\quad\textrm{where $d\sprime (v) :=\sum_{i\in J^+} a_i$.}
\end{equation*}
Thus we  have an expression 
\begin{equation}\label{eq:dv}
v=d(v) \theh- \sum_{j\in J} c_j [\ell_j],
\end{equation}
where $d(v)$ is a non-negative integer, $J$ is a subset of $\{1, \dots, 252\}$,
and $c_j$ are positive integers.
(Since there are linear relations among $[\ell_j]$,
this expression is  not unique.)
Then the vector space $\Gamma(X, \LLL_{v})$ is identified with
the space of global sections   of  
$\OOO_{X_F}(d(v)H_{\infty})$
that vanish along $\ell_j$ with order $c_j$ for each $j\in J$,
that is,
\begin{equation}\label{eq:intersection}
\Gamma(X, \LLL_{v})\;\cong\; V_{d(v)}\cap\bigcap_{j\in J}{I_{j}\spar{c_j}},
\end{equation}
where the intersections are taken in $\F_{25}[w,x,y]$.
From now on,
we regard $\Gamma(X, \LLL_{v})$ as a subspace of $V_{d(v)}$
by~\eqref{eq:intersection}.
%
%
The vector space $ V_{d(v)}$ has a basis 
$$
m_{\alpha}:=wM\;\;\textrm{or}\;\; N\qquad (\alpha=1, \dots, 2+d(v)^2),
$$
where $M$ and $N$ are the monomials of $x$ and $y$ with  $\deg M \le d(v)-3$ and  $\deg N \le d(v)$.
We calculate the  Gr\"obner basis  $G_j$ of the ideal $I_{j}\spar{c_j}\subset \F_{25}[w,x,y]$
for each $j\in J$.
(In the actual calculation,
we used the graded reverse lexicographic order ${\tt grevlex}(w,x,y)$. 
See Chapter~2 of~\cite{MR1417938}.)
We then calculate the remainders $\overline{m_\alpha}^{G_j}$ 
of the monomials $m_\alpha$ by these Gr\"obner bases $G_j$.
 An element $\sum_{\alpha} u_\alpha m_\alpha$ of $V_{d(v)}$ 
 with $u_\alpha\in \F_{25}$ is contained in
 $\Gamma(X, \LLL_{v})$ if and only if
 $$
 \sum_{\alpha} u_\alpha \overline{m_\alpha}^{G_j}=0
 \quad\textrm{for each $j\in J$}.
 $$
 These equalities constitute  a system of linear equations with unknowns $u_\alpha$.
 Solving these equations,
 we obtain a basis of $\Gamma(X, \LLL_{v})$
 as a list of polynomials in $V_{d(v)}$.
 \par
 \medskip
 Let $k$ be a positive integer.
 Then we can write the vector $kv\in \NS(X)$ as 
 $$
 kv:=kd(v) \theh- \sum_{j\in J} kc_j [\ell_j]
 $$
 using the same $d(v)$ and $J$ that appeared in~\eqref{eq:dv}.
 Under this choice,
 the natural homomorphism 
 $$
 \Gamma(X, \LLL_{v})\sp{\otimes k}\to \Gamma(X, \LLL_{kv})
 $$
 is given by restricting the linear homomorphism 
 $$
 g_1\otimes \dots\otimes g_k\;\mapsto\; \overline{ g_1\cdots g_k}^F
 $$
 from $V_{d(v)}\sp{\otimes k}$ to $V_{k d(v)}$.
\subsection{The morphisms $\Phi_h$ and $\phi_h$}\label{subsec:morphism}
We describe an algorithm that takes a vector $h\in \Pol_2(X)$ as input,
and calculates the morphisms $\Phi_h$, $\phi_h$ and a defining equation  
$$
w^2=s_h(x,y,z)
$$
of $X_h$ in $\P(3,1,1,1)$.
%
 %
 %
We have 
$$
\dim \Gamma (X, \LLL_h)=3,
\quad
\dim \Gamma (X, \LLL_{3h})=11,
\quad
\dim \Gamma (X, \LLL_{6h})=38.
$$
 We find an expression $h=d(h) \theh- \sum_{j\in J} c_j [\ell_j]$ of $h$
 in the form~\eqref{eq:dv}.
By the method described above,
we obtain three polynomials
$$
\xi_i(w,x,y)\in V_{d(h)} \qquad (i=0,1,2)
$$
that form a basis of $\Gamma (X, \LLL_h)$. 
The rational map $(w, x, y)\mapsto [\xi_0:\xi_1: \xi_2]$
gives the morphism $\Phi_h: X_F\to \P^2$.
\par
\smallskip
Next we calculate  eleven  polynomials
that form a basis of $\Gamma (X, \LLL_{3h})\subset V_{3d(h)}$
using the expression 
$3h=3d(h) \theh- \sum_{j\in J} 3c_j [\ell_j]$.
We compute the normal forms
$$
\overline{\xi_{i}\xi_{i\sprime}\xi_{i\spprime}}^F\qquad (i, i\sprime, i\spprime \in \{0,1,2\})
$$
of the ten polynomials $\xi_{i}\xi_{i\sprime}\xi_{i\spprime}$.
These normal forms are contained in $\Gamma (X, \LLL_{3h})$.
Then we  find a polynomial  $\omega\in V_{3d(h)}$ 
that is contained in  $\Gamma (X, \LLL_{3h})$, but is \emph{not} contained in 
the $10$-dimensional subspace spanned by $\overline{\xi_{i}\xi_{i\sprime}\xi_{i\spprime}}^F$.
The rational map 
$$
(w, x, y)\mapsto [\omega: \xi_0:\xi_1: \xi_2]\in \P(3,1,1,1)
$$
gives the morphism $\phi_h: X_F\to X_h$.
\par
\smallskip
We then compute
the $39$ normal forms
$$
\overline{\omega^2}^F,\qquad
\overline{\omega \xi_{i}\xi_{i\sprime}\xi_{i\spprime}}^F,
\qquad
\overline{\xi_{i_1}\xi_{i_2}\cdots\xi_{i_6}}^F
\quad(i, i\sprime, i\spprime, i_1, \dots, i_6\in \{0,1,2\}),
$$
which are contained in $\Gamma(X, \LLL_{6h})\subset V_{6d(h)}$.
Since $\dim\Gamma(X, \LLL_{6h})=38$,
there exists a non-trivial linear relation over $\F_{25}$ among these $39$ polynomials.
Using  homogeneous polynomials $b(x, y, z)$ of degree $3$
and $c(x, y, z)$ of degree $6$
with coefficients in $\F_{25}$,
we write this linear relation as
\begin{equation}\label{eq:linrel}
\overline{a\,\omega^2+ b(\xi_0,\xi_1, \xi_2)\,\omega+c(\xi_0,\xi_1, \xi_2)}^F=0,
\end{equation}
where $a\in \F_{25}$.
Since  $\omega$ is not invariant under the deck-transformation of  $X_F$ over $\P^2$,
we may assume that $a=1$.
We  replace
$\omega$ by
$$
\omega-2\,\overline{b(\xi_0,\xi_1, \xi_2)}^F\in V_{3d(h)}.
$$
Then the linear relation~\eqref{eq:linrel} is  written as 
$$
\overline{\omega^2}^F=\overline{s_h(\xi_0,\xi_1, \xi_2)}^F,
\quad\textrm{where}\quad
s_h(x, y, z):=-\,b(x, y, z)^2-c(x, y, z).
$$
The projective model $\psi_h: X_h\to \P^2$ is defined by $w^2=s_h(x, y, z)$.
%
\begin{remark}\label{rem:difficulty}
The computational difficulty of this method
grows rapidly as $d(h)$ increases.
\end{remark}
\subsection{The projective equivalence}\label{subsec:projmodels} 
Let $\algcloF$ denote an algebraic closure of $\F_{25}$.
For $T\in \GL_3(\algcloF)$,
we denote by $[T]\in \PGL_3(\algcloF)$ the image of $T$
by the natural map $\GL_3(\algcloF)\to \PGL_3(\algcloF)$,
and by $P\mapsto P^{[T]}$ the  linear transformation of $\P^2$ given by
$[a:b:c]\mapsto [a:b:c]\,T$.
Let $\HH_6$ denote the set of homogeneous polynomials of degree $6$ in variables $x,y,z$
with coefficients in $\F_{25}$.
For $f\in \HH_6\tensor \algcloF$,
we put
$$
f^T(x, y, z):=f(x\sprime, y\sprime, z\sprime),
\quad\textrm{where}\quad (x\sprime, y\sprime, z\sprime)=(x, y, z)\,T\inv.
$$
If $f=0$ defines a curve $C\subset \P^2$,
then $f^T=0$ defines the image $C^{[T]}$ of the curve $C$ by 
the projective linear transformation $P\mapsto P^{[T]}$.
\par
\medskip
Let $h$ and $h\sprime$ be elements of $\Pol_2(X)$.
By definition, 
we have the following:
\begin{equation}\label{eq:simdef1}
h\sim h\sprime \;\Longleftrightarrow \;
\parbox{9cm}{there exist $T\in \GL_3(\algcloF)$ and 
$c\in \algcloF\sptimes$ such that $s_{h\sprime}=c\,s_h^T$.}
\end{equation}
The polynomials $\omega, \xi_0,\xi_1, \xi_2$
giving $\phi_h :X_F\to X_h$ that are
obtained in the previous subsection are unique
up to the following transformations:
\begin{eqnarray*}
\omega &\mapsto &\lambda\omega, \quad\textrm{where}\quad     \lambda\in \F_{25}\sptimes, \\
(\xi_0,\xi_1, \xi_2)&\mapsto& (\xi_0,\xi_1, \xi_2) T,  \quad\textrm{where}\quad   T\in \GL_3(\F_{25}).
\end{eqnarray*}
Under this transformation,
the sextic polynomial $s_h\in \HH_6$  is changed to $\lambda^2 s_h^T$.
Therefore we can define the following relation $\sim_{\F}$ on $\Pol_2(X)$:
\begin{equation}\label{eq:simFdef1}
h\sim_{\F} h\sprime \;\Longleftrightarrow\;
\parbox{9cm}{there exist $T\in \GL_3(\F_{25})$ and $\lambda\in \F_{25}\sptimes$ such that $s_{h\sprime}=\lambda^2 s_h^T$.}
\end{equation}
We investigate the relation between $\sim$ and $\sim_{\F}$.
\begin{lemma}\label{lem:square}
Suppose that there exist $T\in \GL_3(\F_{25})$ and 
$c \in \algcloF\sptimes$  that satisfy $s_{h\sprime}=c\, s_h^T$.
Then $h\sim_{\F} h\sprime$ holds.
\end{lemma}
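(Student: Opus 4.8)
We are given $T \in \GL_3(\F_{25})$ and $c \in \algcloF^\times$ with $s_{h'} = c\,s_h^T$, and we must produce $\lambda \in \F_{25}^\times$ with $s_{h'} = \lambda^2 s_h^T$; equivalently, we must show that $c$ can be taken to be a square in $\F_{25}^\times$. The key observation is that both $s_{h'}$ and $s_h^T$ already have coefficients in $\F_{25}$: the former because it is the defining sextic of the projective model $X_{h'}$ produced by the algorithm of Section~\ref{subsec:morphism}, which works entirely over $\F_{25}$, and the latter because $s_h \in \HH_6$ and $T \in \GL_3(\F_{25})$, so that the substitution $(x,y,z)\mapsto (x,y,z)T^{-1}$ preserves the property of having $\F_{25}$-coefficients. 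Thus the relation $s_{h'} = c\, s_h^T$ is an equality between two nonzero elements of the $\F_{25}$-vector space $\HH_6$ that differ by the scalar $c \in \algcloF^\times$.

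The plan is as follows. First I would fix any single monomial $x^i y^j z^k$ (with $i+j+k=6$) whose coefficient in $s_h^T$ is nonzero; such a monomial exists because $s_h^T \neq 0$. Comparing the coefficients of this monomial on the two sides of $s_{h'} = c\, s_h^T$, I get an equation $\beta = c\,\alpha$ with $\alpha, \beta \in \F_{25}$ and $\alpha \neq 0$. This forces $c = \beta/\alpha \in \F_{25}$, so in fact $c \in \F_{25}^\times$ already. The only remaining point is to upgrade $c$ from an element of $\F_{25}^\times$ to a \emph{square} in $\F_{25}^\times$, which is what the form $\lambda^2$ in~\eqref{eq:simFdef1} demands.

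Here I would invoke the structure of the finite field: $\F_{25}^\times$ is cyclic of even order $24$, so its squares form the unique index-$2$ subgroup consisting of the $12$ elements whose order divides $12$. The crucial input is that the transformation freedom recorded just before~\eqref{eq:simFdef1} lets us rescale $\omega \mapsto \mu\,\omega$ for any $\mu \in \F_{25}^\times$, which replaces $s_h$ by $\mu^2 s_h$ and hence multiplies the scalar relating $s_{h'}$ to $s_h^T$ by $\mu^2$. Since $\mu^2$ ranges over all squares as $\mu$ ranges over $\F_{25}^\times$, I can absorb any nonsquare factor only if I have one to spare; the honest resolution is that $c$ must \emph{already} be a square, which I would establish by a degree/parity argument: the equation $s_{h'} = c\,s_h^T$ expresses that the two sextics define the same double cover $w^2 = s$ up to the rescaling $w \mapsto \sqrt{c}\,w$, and since both $X_h$ and $X_{h'}$ are defined over $\F_{25}$ in the coordinate $w$, consistency of $w^2 = s_{h'}$ with $w^2 = s_h^T$ under an $\F_{25}$-coordinate change of $\P(3,1,1,1)$ forces $\sqrt{c} \in \F_{25}$, i.e. $c = \lambda^2$ with $\lambda \in \F_{25}^\times$.

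The main obstacle, and the step deserving the most care, is precisely this last square-versus-nonsquare dichotomy: showing that one cannot have $s_{h'} = c\,s_h^T$ with $c$ a nonsquare in $\F_{25}^\times$. A priori nothing in the bare coefficient comparison rules this out, so the argument must use that $c$ arises geometrically as the ratio of the $w$-coordinate scalings of two models that are both genuinely defined over $\F_{25}$ (not merely over $\algcloF$). Concretely, the isomorphism $X_{h'} \isom X_h$ underlying $s_{h'} = c\,s_h^T$ acts on $w$ by $w \mapsto \sqrt{c}\,w$ while acting on $(x,y,z)$ by $T \in \GL_3(\F_{25})$; since the coordinate $w$ on each $X_h$ is determined up to $\F_{25}^\times$ by the construction in Section~\ref{subsec:morphism}, the scaling factor $\sqrt{c}$ must lie in $\F_{25}^\times$. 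I would therefore organize the proof around making this $\F_{25}$-rationality of $w$ explicit, and only then read off $\lambda := \sqrt{c} \in \F_{25}^\times$ with $s_{h'} = \lambda^2 s_h^T$, which is the claim. \qed
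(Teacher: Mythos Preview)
Your approach is correct and follows essentially the same route as the paper's, though the paper's argument is much more compact. The paper introduces the function field $K:=\operatorname{Frac}\bigl(\F_{25}[w,x,y]/(F)\bigr)$ of $X_F$ and uses two facts: first, that $\algcloF\cap K=\F_{25}$ (i.e.\ $\F_{25}$ is algebraically closed in $K$), and second, that the hypothesis $s_{h'}=c\,s_h^T$ forces $c\in\F_{25}^\times$ and yields nonzero elements $\omega,\omega'\in K$ with $\omega'^2=c\,\omega^2$. From this, $c=(\omega'/\omega)^2$ is a square in $K$, and since $\sqrt{c}\in\algcloF\cap K=\F_{25}$, it is a square in $\F_{25}^\times$.

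Your geometric phrasing (``the coordinate $w$ on each $X_h$ is determined up to $\F_{25}^\times$ by the construction, so $\sqrt{c}\in\F_{25}^\times$'') is precisely this function-field argument in disguise: the $\omega$ and $\omega'$ produced by Section~5.2 are the concrete elements of $K$ that the paper invokes. Your detour through the rescaling freedom $\omega\mapsto\mu\omega$ is unnecessary, as you yourself recognize, and the long final paragraph can be collapsed to the single line ``$\omega,\omega'\in K$ with $\omega'^2=c\,\omega^2$ gives $\sqrt{c}\in K\cap\algcloF=\F_{25}$''. The paper's formulation has the advantage of making the algebraic content explicit and avoiding any need to talk about isomorphisms between the different projective models.
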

\begin{proof}
Let $K$ denote the quotient field of 
the integral domain $\F_{25}[w,x,y]/(F)$.   
Then we have $\algcloF\cap K=\F_{25}$.
By the assumption $s_{h\sprime}=c\, s_h^T$,
we see that $c \in \F_{25}\sptimes$ and that 
there exist non-zero elements $\omega$ and $\omega\sprime$ of $K$ such that
$\omega^{\prime 2}=c\,\omega^2$.
Hence $c$ is a non-zero square in $\F_{25}$.
\end{proof}
Let $B_1=\{f_1=0\}$ and $B_2=\{f_2=0\}$ be reduced plane curves 
defined by $f_1\in \HH_6$ and $f_2\in \HH_6$, respectively.
We consider the set 
$$
\isoms(B_1, B_2):=\set{\tau\in \PGL_3(\algcloF)}{B_1^\tau=B_2}
$$
of projective isomorphisms from $B_1$ to $B_2$ defined 
over $\algcloF$.
By definitions and Lemma~\ref{lem:square}, we have 
\begin{eqnarray}
h\sim h\sprime &\Longleftrightarrow & \isoms(B_h, B_{h\sprime})\ne \emptyset, \label{eq:relisoms1}
\\ 
h\sim_{\F} h\sprime &\Longleftrightarrow & 
\isoms(B_h, B_{h\sprime})\cap\PGL_{3}(\F_{25})\ne \emptyset. \label{eq:relisoms2}
\end{eqnarray}
\begin{definition}\label{def:tauQQ}
Let $Q=[Q_0, Q_1, Q_2, Q_3]$ and 
$Q\sprime=[Q\sprime_0, Q\sprime_1, Q\sprime_2, Q\sprime_3]$ 
be two ordered $4$-tuples of points of $\P^2$
such that no three points of $Q$ are colinear and no three points of $Q\sprime$ are colinear.
Then there exists a unique projective transformation
$\tau_{QQ\sprime}\in \PGL_3(\algcloF)$
such that
$$
Q^{\tau_{QQ\sprime}}:=[Q_0^{\tau_{QQ\sprime}}, Q_1^{\tau_{QQ\sprime}}, Q_2^{\tau_{QQ\sprime}}, Q_3^{\tau_{QQ\sprime}}]
$$
is equal to $Q\sprime$.
Let $T_{QQ\sprime}\in \GL_3(\algcloF)$
denote a matrix such that $[T_{QQ\sprime}]=\tau_{QQ\sprime}$.
\end{definition}
Let $B$ be a reduced plane curve  defined over $\algcloF$.
We define $\QQQ(B)$ to  be 
\begin{equation}\label{eq:QQQ}
\left\{\;\;[Q_0, Q_1, Q_2, Q_3]\;\;\left|\;\;
\parbox{6.2cm}{$Q_i\in \Sing (B)$ for $i=0, \dots, 3$, and no three  of $Q_0, \dots, Q_3$ are colinear}\right.\;\;\right\}.
\end{equation}
Let $R$ be an element of $\QQQ(B_1)$.
Then the map
$\tau\mapsto R^\tau$
induces a bijection
\begin{equation}\label{eq:isomstoQ}
\isoms (B_1, B_2)\;\cong\;
\set{Q\sprime \in \QQQ(B_2)}%
{\textrm{$f_2=c\,f_1^{T_{RQ\sprime}}$ for some $c\in \algcloF\sptimes$}}.
\end{equation}
If all points of $Q$ and $Q\sprime$ are $\F_{25}$-rational,
then we have $\tau_{QQ\sprime}\in \PGL_3(\F_{25})$.
Hence 
we obtain the following:
\begin{lemma}\label{lem:QQQ}
Suppose that
every singular point of $B_h$ and $B_{h\sprime}$ is $\F_{25}$-rational,
and that $\QQQ(B_h)$ and  $\QQQ(B_{h\sprime})$ are non-empty.
Then $\isoms(B_h, B_{h\sprime})$ is contained in $\PGL_3(\F_{25})$.
\qed
\end{lemma}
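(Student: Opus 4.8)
The plan is to combine the bijection in \eqref{eq:isomstoQ} with the rationality observation immediately preceding the lemma. The statement to be proved is precisely Lemma~\ref{lem:QQQ}: under the hypotheses that every singular point of $B_h$ and of $B_{h\sprime}$ is $\F_{25}$-rational and that $\QQQ(B_h)$ and $\QQQ(B_{h\sprime})$ are both non-empty, the set $\isoms(B_h, B_{h\sprime})$ is contained in $\PGL_3(\F_{25})$.

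First I would dispose of the trivial case in which $\isoms(B_h, B_{h\sprime})$ is empty, since then the containment holds vacuously. So assume there exists some $\tau \in \isoms(B_h, B_{h\sprime})$; the goal is to show any such $\tau$ lies in $\PGL_3(\F_{25})$. Since $\QQQ(B_h)$ is non-empty by hypothesis, I can fix an element $R \in \QQQ(B_h)$ and apply the bijection \eqref{eq:isomstoQ} with $B_1 = B_h$ and $B_2 = B_{h\sprime}$. Under this bijection, the chosen $\tau$ corresponds to the $4$-tuple $Q\sprime := R^{\tau} \in \QQQ(B_{h\sprime})$, and by the definition of $\tau_{RQ\sprime}$ in Definition~\ref{def:tauQQ} we have $\tau = \tau_{RQ\sprime}$, the unique projective transformation carrying $R$ to $Q\sprime$.

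The decisive step is then the rationality remark stated just before the lemma: if all points of $R$ and all points of $Q\sprime$ are $\F_{25}$-rational, then $\tau_{RQ\sprime} \in \PGL_3(\F_{25})$. Here $R \in \QQQ(B_h)$ consists of singular points of $B_h$, and $Q\sprime \in \QQQ(B_{h\sprime})$ consists of singular points of $B_{h\sprime}$; by the standing hypothesis every singular point of $B_h$ and of $B_{h\sprime}$ is $\F_{25}$-rational, so both $4$-tuples are $\F_{25}$-rational. Hence $\tau = \tau_{RQ\sprime} \in \PGL_3(\F_{25})$. Since $\tau$ was an arbitrary element of $\isoms(B_h, B_{h\sprime})$, this proves the claimed containment.

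I do not expect any serious obstacle here: the content of the lemma is essentially a bookkeeping consequence of \eqref{eq:isomstoQ} together with the elementary fact that the projective transformation determined by two $\F_{25}$-rational frames is itself defined over $\F_{25}$ (this is the standard statement that a projectivity is determined over the field generated by the coordinates of the points it matches, applied to two frames in general position). The only point requiring mild care is to confirm that the image $4$-tuple $R^{\tau}$ genuinely lands in $\QQQ(B_{h\sprime})$ rather than merely being a tuple of points of $\P^2$ — but this is built into the bijection \eqref{eq:isomstoQ}, since any $\tau$ mapping $B_h$ isomorphically onto $B_{h\sprime}$ carries singular points to singular points and preserves the colinearity condition, so $R^{\tau} \in \QQQ(B_{h\sprime})$ automatically.
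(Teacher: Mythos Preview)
Your argument is correct and is exactly the approach the paper takes: the lemma is stated with a \qed and no further proof, being an immediate consequence of the bijection~\eqref{eq:isomstoQ} and the observation just before the lemma that $\tau_{QQ\sprime}\in\PGL_3(\F_{25})$ whenever both $4$-tuples are $\F_{25}$-rational. Your write-up simply spells out these two sentences in detail.
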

The bijection~\eqref{eq:isomstoQ} also 
provides  us with a practical method to calculate the group $\aut(B)=\isoms(B, B)$
for a  plane curve $B$ defined over $\F_{25}$ satisfying
$\Sing(B)\subset\P^2(\F_{25})$ and $\QQQ(B)\ne\emptyset$.
\section{Proof of Theorem~\ref{thm:projmodels}}\label{sec:proof1}
\algostep
First note that $\Pol_2(X)\cap \Ball_3=\{\theh\}$.
%
\algostep
We calculate the sets
$$
\VVV_{\delta}:=\set{v\in \NS(X)}{\intfNS{v}{v}=2,\;\;\intfNS{v}{\theh}=\delta},
$$
for $\delta=4$ and $5$ by the algorithm in Section~\ref{subsec:applicationI}.
The cardinalities of  these sets are
$|\VVV_4|=1,020,600$ and 
$|\VVV_5|=208,059,000$.
We put
$$
\VVV:=\{\theh\}\cup \VVV_4\cup \VVV_5.
$$
Our goal is to calculate the subset  $\Pol_2(X)\cap \Ball_5=\Pol_2(X)\cap \VVV$ of $\VVV$,  and decompose it 
into the equivalence classes of the relation $\sim$
of the  projective equivalence.
Note that $\Aut(X, \theh)$ acts on $\VVV_4$, $\VVV_5$ and $\Pol_2(X)$,
and that, if  $h$ and $h\sprime$ are in the same $\Aut(X, \theh)$-orbit, 
then we have $h\sim_{\F}h\sprime$,
because every element of $\Aut(X, \theh)$ is defined over $\F_{25}$.
\algostep
We have embedded $\Aut(X, \theh)$ in  $\mathrm{O}(\NS(X))$
by~\eqref{eq:rep}.
Let $\vx=[x_1, \dots, x_{22}]$ and $\vy=[y_1, \dots, y_{22}]$ be vectors in $\NS(X)$.
We put
$$
\vx<_{\mathrm{lex}} \vy
\quad
\Longleftrightarrow\quad
\textrm{there exists $k$ such that 
$x_k<y_k$ and 
$x_j=y_j$ for  $j<k$},
$$
and define a total order $<$ on $\NS(X)$  by
$$
\vx< \vy
\quad
\Longleftrightarrow\quad\sum_{i=1}^{22} |x_i|< \sum_{i=1}^{22} |y_i| \quad \textrm{or}\quad 
\left(\sum_{i=1}^{22} |x_i|= \sum_{i=1}^{22} |y_i| \quand \vx<_{\mathrm{lex}} \vy\right). 
$$
We then denote by $\RRR$ the set of vectors $v \in \NS(X)$ that  
are minimal in the $\Aut(X, \theh)$-orbit containing $v$:
$$
\RRR:=\set{v\in \NS(X)}{v\le v T\;\;\textrm{for all}\;\; T\in \Aut(X, \theh)}.
$$
We  define the representative vector $v_{\orb}$ of 
each $\Aut(X,\theh)$-orbit $\orb\subset \NS(X)$ by
$$
\orb\cap \RRR=\{v_{\orb}\}.
$$
We calculate the list $\RRR\cap \VVV_4$, $\RRR\cap \VVV_5$, 
and the order of the stabilizer subgroup $\Stab(v)\subset \Aut(X, \theh)$
for each $v\in \RRR\cap \VVV$.
We  obtain
$$
|\RRR\cap \VVV_4|=|\VVV_4/\Aut(X, \theh)|=8
\quand
|\RRR\cap \VVV_5|=|\VVV_5/\Aut(X, \theh)|=312.
$$
\begin{remark}
We choose this total order $<$ on $\NS(X)$ so that we can express 
each $v\in \RRR\cap \VVV$ in
the form~\eqref{eq:dv}
with  $d(v)$ small.
See Remark~\ref{rem:difficulty}.
\end{remark}
%
%
\algostep
For each $v\in \RRR\cap \VVV$,
we calculate the $\Gal(\F_{25}/\F_{5})$-conjugate $\bar{v}=v\Gamma_{\NS}$
of $v$, 
where $\Gamma_{\NS}$ is the matrix that has been calculated in Section~\ref{sec:NSofX}, 
and find the representative vector $v^\Gamma\in \RRR\cap \VVV$
of the  $\Aut(X, \theh)$-orbit containing $\bar{v}$.
%
%
\algostep
For each $v\in \RRR\cap \VVV$,
we calculate the sets $S_1$ and $S_2$ in Proposition~\ref{prop:polarization},
and determine whether $v$ is a polarization or not.
We  obtain
$$
|\Pol_2(X)\cap\RRR\cap \VVV_4|=7\quand
|\Pol_2(X)\cap\RRR\cap \VVV_5|=224.
$$
%
%
%
%
%
%
%
%
\algostep
For simplicity, we put
$$
\HHH:=\Pol_2(X)\cap\RRR\cap \VVV.
$$
By means of the algorithms in Sections~\ref{subsec:hexceptional} and~\ref{subsec:hlines},
we calculate, for each  $h\in \HHH$,
the set $\Exc(h)$ of the classes of $h$-exceptional curves, 
and the set $\Lin(h)$ of the classes of $h$-lines.
From $\Exc(h)$, we  determine the $ADE$-type $\RT(h)$ of $\Sing (B_{h})$.
%
We then confirm that the union of $\Exc(h)$ and  $\Lin(h)$  spans $\NS(X)$
for any $h\in \HHH$.
Thus Proposition~\ref{prop:splgen} is proved.
%
%
%
\algostep
For each $h\in \HHH$,
we carry out the computation in Section~\ref{sec:morphism}, 
and  calculate  polynomials
$\omega, \xi_0, \xi_1, \xi_2\in \F_{25}[w,x,y]$
that give the morphism $\phi_h : X_F\to X_h$,
and  $s_h(x,y,z)\in \HH_6$ 
such that $w^2=s_h(x,y,z)$ defines $X_h$.
Then we compute the coordinates of the singular points of $B_h=\{s_h=0\}$.
\begin{remark}\label{rem:QQQSing}
By this computation, we observe the following fact.
For any $h\in \HHH$ with $\RT(h)\ne 0$,
every singular point of $B_h$
is $\F_{25}$-rational, 
and the set $\QQQ(B_h)$ defined by~\eqref{eq:QQQ} is non-empty.
By Lemma~\ref{lem:QQQ},
it follows that  
$\isoms(B_{h}, B_{h\sprime})$ is contained in $\PGL_3(\F_{25})$
for any $h, h\sprime\in \HHH$ with $\RT(h)\ne 0$ and $\RT(h\sprime)\ne 0$.
\end{remark}
\begin{remark}
It turns out that each $(-2)$-curve contracted by $\phi_h$
is either an $h_F$-line or an irreducible component of the pull-back by $\psi_F$ of
a plane conic totally tangent to $B_F$ (see~\cite{shimadaHermite}).
We can calculate the coordinates of the singular points of $B_h$ using this fact.
\end{remark}
\algostep
We decompose $\HHH$ into  
the equivalence classes under the relation $\sim_{\F}$ 
defined by~\eqref{eq:simFdef1},
and confirm that the relations $\sim$ and $\sim_{\F}$ are the same on $\HHH$.
\subsubsection{The case where $B_h$ is non-singular.}\label{subsubsec:nonsing}
In $\HHH$, there are exactly three polarizations $h$ such that $\RT(h)=0$: $\theh$ and 
\begin{eqnarray*}
h_F\sprime &=& 
[1, 0, 0, 1, 0, 1, 0, 0, 0, 0, 1, 0, 1, 0, -1, 0, 0, 0, 0, 0, 0, 0]\in\VVV_4,
\quand\\
h_F\spprime &=& 
[0, -1, 0, 2, 1, 0, 0, 0, 0, 0, 1, 0, 1, 0, 1, 1, 0, -1, 0, 0, 0, 0]\in\VVV_5.
\end{eqnarray*}
Applying the following result,
which is a corollary of n.~3 of \cite{MR0213949},   to $s_{h_F\sprime}$ and $s_{h_F\spprime}$,
we see that $h_F\sprime\sim_{\F} \theh$ and $h_F\spprime\sim_{\F} \theh$.
\begin{corollary}
For $h\in \Pol_2(X)$,
we have $h\sim_{\F}\theh$ if and only if 
there exist a $3\times 3$ non-degenerate matrix $(a_{ij})$ over $\F_{25}$ with $a_{ij}=a_{ji}^5$ 
and  $\lambda\in \F_{25}\sptimes$ such that 
$s_h=s_h(x_0, x_1, x_2)$ is of the form
$\lambda^2\sum_{i, j=0}^2 a_{ij}x_i x_j^5$.
\end{corollary}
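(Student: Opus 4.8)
The plan is to reduce the statement to the classical classification of nondegenerate Hermitian forms over the finite field $\F_{25}$. The starting observation is that the Fermat polarization $\theh$ has branch curve $B_{\theh}=B_F$ with
$$
s_{\theh}(x_0,x_1,x_2)=x_0^6+x_1^6+x_2^6=\sum_{i=0}^2 x_i x_i^5,
$$
which is exactly the Hermitian form $\sum_{i,j}a_{ij}x_i x_j^5$ attached to the identity matrix; this matrix is nondegenerate and satisfies $a_{ij}=\delta_{ij}=\delta_{ji}^5=a_{ji}^5$. By the definition~\eqref{eq:simFdef1} of $\sim_{\F}$, the relation $h\sim_{\F}\theh$ holds precisely when $s_h=\lambda^2 s_{\theh}^T$ for some $T\in\GL_3(\F_{25})$ and $\lambda\in\F_{25}\sptimes$, so the whole corollary amounts to identifying the set $\{s_{\theh}^T : T\in\GL_3(\F_{25})\}$ with the set of nondegenerate Hermitian forms in three variables.

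For the ``only if'' direction I would compute directly how $s_{\theh}$ transforms. Writing the substitution $(x_0,x_1,x_2)\mapsto(x_0,x_1,x_2)S$ with $S=T\inv\in\GL_3(\F_{25})$ and using that $c\mapsto c^5$ is the nontrivial element of $\Gal(\F_{25}/\F_5)$ fixing $\F_5$, one finds
$$
s_{\theh}^T=\sum_{i,j}b_{ij}x_i x_j^5,\qquad (b_{ij})=S\,\transpose{\overline S},
$$
where $\overline S$ is the matrix obtained from $S$ by raising every entry to the fifth power. A one-line check gives $b_{ij}=b_{ji}^5$ (using $c^{25}=c$ for $c\in\F_{25}$), and $\det(b_{ij})=\det(S)\det(S)^5\ne 0$, so $(b_{ij})$ is a nondegenerate Hermitian matrix. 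Hence $s_h=\lambda^2\sum b_{ij}x_i x_j^5$ is of the asserted form.

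For the converse I would invoke the classification underlying n.~3 of~\cite{MR0213949}: over $\F_{25}$ every nondegenerate Hermitian form in three variables lies in a single $\GL_3(\F_{25})$-equivalence class, the standard one $\sum_i x_i x_i^5$ (the norm map $\F_{25}\sptimes\to\F_5\sptimes$ is surjective, so there is no discriminant obstruction). Concretely, given nondegenerate $(a_{ij})$ with $a_{ij}=a_{ji}^5$, this yields $S\in\GL_3(\F_{25})$ with $S(a_{ij})\transpose{\overline S}=\Id$, equivalently $(a_{ij})=S\inv\transpose{\overline{S\inv}}$. By the transformation formula of the previous paragraph this says exactly $\sum a_{ij}x_i x_j^5=s_{\theh}^{S}$, whence $s_h=\lambda^2 s_{\theh}^{S}$ and $h\sim_{\F}\theh$ by~\eqref{eq:simFdef1}. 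Alternatively the converse can be phrased geometrically through~\eqref{eq:relisoms2}: the hypothesis makes $B_h$ a nondegenerate Hermitian curve, hence projectively equivalent over $\F_{25}$ to $B_F$, so $\isoms(B_h,B_F)\cap\PGL_3(\F_{25})\ne\emptyset$.

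The routine parts are the conjugate-transpose bookkeeping and the verification that the Hermitian symmetry and nondegeneracy are preserved under $\GL_3(\F_{25})$; the only substantive input, and the step I would treat as the main obstacle, is the uniqueness of the nondegenerate Hermitian equivalence class over $\F_{25}$, which is precisely what is imported from~\cite{MR0213949}. A minor point to keep straight is that the scalar in~\eqref{eq:simFdef1} is a square $\lambda^2$ rather than an arbitrary unit; this causes no difficulty here, since the classification reduces $(a_{ij})$ to $\Id$ with no leftover scalar (compare Lemma~\ref{lem:square}), so the very $\lambda$ occurring in the hypothesis can be reused.
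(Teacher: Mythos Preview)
Your proposal is correct and follows exactly the route the paper indicates: the paper does not write out a proof but merely records the statement as ``a corollary of n.~3 of~\cite{MR0213949}'', i.e., of the classification of nondegenerate Hermitian forms over $\F_{25}$, and you have faithfully unpacked that citation by computing $s_{\theh}^T=\sum (S\,\transpose{\overline S})_{ij}x_ix_j^5$ and invoking the uniqueness of the equivalence class. The bookkeeping (Hermitian symmetry, nondegeneracy, the square scalar) is all in order.
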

\subsubsection{The case where $B_h$ is singular.}\label{subsubsec:sing}
We introduce a total order $\prec$ on the set $\HH_6$.
(Any total order will do.)
We fix four reference points 
$$
P_0:=[1:0:0],
\;\;
P_1:=[0:1:0],
\;\;
P_2:=[0:0:1],
\;\;
P_3:=[1:1:1],
$$
and put $P:=[P_0, P_1, P_2, P_3]$.
For $h\in \HHH$ with $\RT(h)\ne 0$, 
we put 
\begin{eqnarray*}
\TTT(h)&:=&\set{\tau\in \PGL_3(\algcloF)}{\Sing(B_h^\tau)\ni P_i\;\;\textrm{for}\;\; i=0,1,2,3}
=\shortset{\tau_{QP}}{Q\in \QQQ(B_h)},\\
S(h)&:=&\set{\lambda^2 s_h^T }{\lambda\in \F_{25}\sptimes,\;\; T\in \GL_3(\F_{25})},\\
S^P(h)&:=&
\set{s\sprime_h\in S(h)}{\textrm{the curve $s_h\sprime=0$ is singular at $P_0, \dots, P_3$}}.
\end{eqnarray*}
By Remark~\ref{rem:QQQSing}, we have $\TTT(h)\subset\PGL_3(\F_{25})$ and $\TTT(h)\ne\emptyset$, and hence 
$$
S^P(h)=\set{\lambda^2 s_h^T}{\lambda\in \F_{25}\sptimes,\;\; T\in \GL_3(\F_{25}),\;\; [T]\in \TTT(h)}\ne \emptyset
$$
holds.
Since $\QQQ(B_h)$ is easily calculated, so is $S^P(h)$.
We put
$$
s_h^{\min} := \textrm{the minimal element of $S^P(h)$ with respect to the fixed total order $\prec$}.
$$
By definition, we have $h\sim_{\F}h\sprime$ if and only if $S(h)=S(h\sprime)$.
Hence we have 
\begin{equation*}\label{eq:projmodelS}
h\sim_{\F}h\sprime\;\; \Longleftrightarrow\;\;s_h^{\min}=s_{h\sprime}^{\min}.
\end{equation*}
By this method, we  decompose $\HHH$ into the equivalence classes of  $\sim_{\F}$.
\par
\medskip
Remark~\ref{rem:QQQSing} combined with~\eqref{eq:relisoms1},~\eqref{eq:relisoms2} 
imply that  $\sim$ and $\sim_{\F}$ define the same relation on $\HHH$.
Thus the equivalence classes $\projmodel_0$, \dots, $\projmodel_{64}$ of $\sim$ are obtained.
\par
\medskip
For $h\in \HHH$,
we denote by $[h]\subset \HHH$ the equivalence class of $\sim$
containing $h$,  by $s_{[h]}$ 
the polynomial  $s_h^{\min}$ obtained above, and 
 by $B_{[h]}$ the plane curve $\{s_{[h]}=0\}$.
%
%
\algostep
For each equivalence class $[h]\subset \HHH$, 
we calculate the group $\aut (B_{[h]})=\isoms(B_{[h]}, B_{[h]})$  
and the set $\isoms(B_{[h]}, \overline{B_{[h]}})$ by the method given in Section~\ref{subsec:projmodels},
where $\overline{B_{[h]}}$ is the plane curve defined by the polynomial 
$\overline{s_{[h]}}\in  \HH_6$
obtained from $s_{[h]}$ by $\sqrt{2}\mapsto-\sqrt{2}$.
%
%
%
%
%
%
%
\algostep
We search for $(T, \lambda)\in \GL_3(\F_{25})\times \F_{25}\sptimes$
such that $\lambda^2 s_{[h]}^T$ has coefficients in $\F_{5}$.
If such $(T, \lambda)$ exists, then we necessarily have $h\sim h^\Gamma$.
\begin{proposition}\label{prop:F5}
For $f\in \HH_6$,
the following conditions are equivalent.
\rm{(i)}
There exist $T\in \GL_3(\F_{25})$
and $\lambda\in \F_{25}\sptimes$
such that $\lambda^2 f^T$ has coefficients in $ \F_{5}$.
\rm{(ii)}
There exist $M\in \GL_3(\F_{25})$
and $c\in \F_{25}\sptimes$
such that $f^M=c\,\bar{f}$, $M\overline M=\Id_3$ and $c^3=1$.
\end{proposition}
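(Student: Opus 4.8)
The plan is to push everything through the conjugate-and-substitute formalism and reduce the equivalence to two facts: the vanishing of the non-abelian $H^1$ for $\GL_3$ over $\F_{25}/\F_5$, and the description of the image of the $8$th-power map on $\F_{25}\sptimes$. First I would record the bookkeeping identities the two operations obey. Writing $\sigma$ for the Frobenius $a\mapsto a^5$ of $\F_{25}$, the conjugation $f\mapsto\bar f$ and the substitution $f\mapsto f^T$ satisfy $(f^A)^B=f^{AB}$, $\overline{f^T}=\bar f^{\,\overline T}$, and $\overline{\overline g}=g$ on coefficients, while scalars pass freely through the substitution; moreover $g\in\HH_6$ has coefficients in $\F_5$ if and only if $\bar g=g$. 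On the multiplicative side, since $\F_{25}\sptimes$ is cyclic of order $24$ and $\bar\lambda=\lambda^5$, one has $\bar\lambda/\lambda=\lambda^4$ and $(\bar\lambda/\lambda)^2=\lambda^8$, and the image of $\lambda\mapsto\lambda^8$ is precisely the unique subgroup $\{c:c^3=1\}$ of order $3$. This last identity is the bridge that makes $c^3=1$ equivalent to the solvability of $\lambda^8=c$.

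For (i)$\Rightarrow$(ii), I would start from $g:=\lambda^2f^T$ with coefficients in $\F_5$, so $\bar g=g$. Expanding $\bar g=\bar\lambda^2\,\bar f^{\,\overline T}$ and equating with $\lambda^2 f^T$ gives $f^T=(\bar\lambda/\lambda)^2\,\bar f^{\,\overline T}$; applying $(\cdot)^{\overline T^{-1}}$ to both sides and using $(f^T)^{\overline T^{-1}}=f^{T\overline T^{-1}}$ yields $f^{M}=c\,\bar f$ with $M:=T\overline T^{-1}$ and $c:=(\bar\lambda/\lambda)^2=\lambda^8$. Then $M\overline M=T\overline T^{-1}\,\overline T\,T^{-1}=\Id_3$ is immediate from $\overline{\overline T}=T$, and $c^3=\lambda^{24}=1$. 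This settles the easy direction.

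For (ii)$\Rightarrow$(i), the substantive observation is that $M\overline M=\Id_3$ is exactly the $1$-cocycle condition for $\Gal(\F_{25}/\F_5)$ acting on $\GL_3(\F_{25})$ (sending $\sigma\mapsto M$). By the vanishing $H^1(\Gal(\F_{25}/\F_5),\GL_3(\F_{25}))=1$ — equivalently, by Galois descent for the $\sigma$-semilinear involution $v\mapsto M\bar v$ of $\F_{25}^3$ — I can write $M=S\overline S^{-1}$ for some $S\in\GL_3(\F_{25})$, with the columns of $S$ an $\F_5$-basis of the fixed subspace. Since $c^3=1$, I then choose $\lambda\in\F_{25}\sptimes$ with $\lambda^8=c$. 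Setting $g:=\lambda^2 f^S$ and substituting $\bar f=c^{-1}f^{M}$ into $\bar g=\bar\lambda^2\,\bar f^{\,\overline S}$, the relation $M\overline S=S\overline S^{-1}\overline S=S$ collapses the exponent, so $\bar g=\bar\lambda^2c^{-1}f^S=(\bar\lambda/\lambda)^2c^{-1}g=\lambda^8c^{-1}g=g$. Hence $g=\lambda^2 f^S$ has coefficients in $\F_5$, and $T:=S$ witnesses (i).

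The main obstacle — the only step that is not routine Frobenius bookkeeping — is the passage $M\overline M=\Id_3\Rightarrow M=S\overline S^{-1}$. I would justify it concretely rather than by citation: the map $\phi(v)=M\bar v$ is $\sigma$-semilinear and satisfies $\phi^2(v)=M\overline M\,v=v$, so its fixed locus $W=\{v:M\bar v=v\}$ is an $\F_5$-subspace with $W\tensor_{\F_5}\F_{25}=\F_{25}^3$, and any $\F_5$-basis of $W$, read as the columns of $S$, gives $M\overline S=S$. I expect the verification that $W$ has dimension $3$ (the standard descent lemma) to be the one place deserving care; everything else reduces to the arithmetic of $\F_{25}\sptimes$ isolated in the first paragraph.
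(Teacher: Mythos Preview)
Your proof is correct and follows essentially the same route as the paper's: both directions hinge on the identities $M:=T\overline{T}^{-1}$, $c:=\lambda^8$, and the only substantive step is the factorization $M=T\overline{T}^{-1}$ given $M\overline{M}=\Id_3$. The paper carries out this Hilbert~90/descent step concretely by the trace-type map $m(\vx)=\vx+\bar{\vx}M$ (whose image is exactly your fixed subspace $W$) and takes three independent values of $m$ as the rows of an auxiliary matrix, whereas you phrase the same construction as picking an $\F_5$-basis of $W$; these are the same argument in different dress.
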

Since we have already calculated the set $\isoms(B_{[h]}, \overline{B_{[h]}})$
for every $[h]\subset \HHH$,
we can make the list of $(M, c) \in \GL_3(\F_{25})\times \F_{25}\sptimes$
such that $s_{[h]}^M=c\,\overline{s_{[h]}}$.
Therefore we can determine whether the condition (ii) is satisfied or not
for $f=s_{[h]}$.
The proof below shows how to
find $(T, \lambda)$ in the condition (i) 
from $(M, c)$ in the condition (ii).
\begin{proof}[Proof of Proposition~\ref{prop:F5}]
Suppose that (i) holds.
Since  $\bar{\lambda}^2\bar{f}^{\,\overline{T}}=\lambda^2 f^T$,
we have $(\lambda\inv \bar{\lambda})^2 \bar{f}=f^{T\overline{T}\inv}$.
Then $M:=T\overline{T}\inv$ and $c:=(\lambda\inv \bar{\lambda})^2=\lambda^8$
satisfy the equalities in (ii).
Conversely, suppose that (ii) holds.
Then there exists $T\in \GL_3(\F_{25})$ such that $M=T\overline{T}\inv$.
Indeed, let $m: \F_{25}^{3}\to\F_{25}^{3}$ be defined by
$m(\vx):=\vx+\bar{\vx} M$,
where vectors of  $\F_{25}^{3}$ are written as row vectors.
Then there exist $\vx_1, \vx_2, \vx_3$ such that 
$m(\vx_1), m(\vx_2), m(\vx_3)$ are linearly independent.
Let $C$ denote the $3\times 3$ matrix 
whose row vectors are $\vx_1, \vx_2, \vx_3$.
We put
$$
S:=C+\overline{C}M,
$$
which is non-degenerate.
Then we have $\overline{S}=SM\inv$.
Therefore, putting $T:=\overline{S}\inv$, we have $M=T\overline{T}\inv$.
Since $f^M=c\,\bar{f}$,
we have $f^T=c\,\overline{f^T}$.
Since $c^3=1$,
there exists $\lambda\in \F_{25}\sptimes$ such that
$c=\lambda^8=(\lambda\inv\bar{\lambda})^2$.
Then we have
$\lambda^2 f^T=\bar{\lambda}^2 \overline{f^T}$, 
and hence $\lambda^2 f^T$ has coefficients in $\F_{5}$.
\end{proof}
\begin{remark}
Except for the equivalence class $\projmodel_7=\overline{\projmodel}_7$,
we have found a defining equation $s_{\F, [h]}$ of $B_h$ with coefficients in $\F_5$
for each $\projmodel_n$ with $\projmodel_n=\overline{\projmodel}_n$.
\end{remark}
\section{The list of projective models $\projmodel_0$, \dots, $\projmodel_{64}$}\label{sec:list}
%
%
%
%
%
%
{\tiny
\PPMM{0}{0}{0}{378000}{[0,0,0,0,0,0,0,0,0,126]}{x^6+y^6+1}{13051}{[756000]_2, [720]_4, [63]_5}
{[1,1,0,0,0,0,0,0,0,0,0,0,0,0,0,0,0,0,0,0,0,0]}
%
%
\PPMM{1}{1}{6A\sb{1}}{12}{[0, 0, 0, 0, 0, 0, 12, 0, 30, 18]}{{x}^{6}+3\,{x}^{5}y+{x}^{4}{y}^{2}+2\,{x}^{3}{y}^{3}+{y}^{6}+3\,{x}^{4
}+3\,{x}^{2}{y}^{2}+x{y}^{3}+3\,xy+2\,{y}^{2}+4
}{5607000}{[3, 12]_4,[1, 1, 1, 1, 1, 1, 2, 2]_5}{[0, 0, 0, 0, 0, 0, 0, 0, 1, 1, 0, 0, 0, 0, 1, 0, 0, 0, 0, 0, 0, 1]}
\PPMM{2}{2}{7A\sb{1}}{6}{[0, 0, 0, 0, 0, 0, 15, 0, 24, 13]}{{x}^{6}+2\,{x}^{4}{y}^{2}+{x}^{2}{y}^{4}+{x}^{2}{y}^{3}+2\,{y}^{5}+{x}
^{4}+2\,{y}^{4}+2\,{x}^{2}y+2\,{y}^{3}+3\,{y}^{2}+3\,y+2
}{6678000}{[2]_4,[1, 1, 1, 1, 1, 1, 2, 2, 2, 2, 6, 6]_5}{[0, 0, 0, 0, 0, 0, 0, 0, 0, 1, 1, 1, 0, 0, 0, 0, 1, 0, 0, 0, 0, 0]}
\PPMM{3}{3}{3A\sb{1} + 2A\sb{2}}{6}{[0, 0, 0, 1, 0, 6, 0, 12, 15, 16]}{{x}^{6}+3\,{x}^{3}{y}^{3}+{y}^{6}+3\,{x}^{3}y+2\,{y}^{2}+2
}{2268000}{[1, 1, 1]_5}{[0, 0, 0, 0, 0, 0, 0, 0, 1, 1, 1, 0, 1, 0, 1, 0, 0, 0, 0, 0, 0, 0]}
\PPMM{4}{4}{8A\sb{1}}{8}{[0, 0, 0, 0, 0, 0, 16, 0, 24, 4]}{{x}^{6}+3\,{x}^{4}{y}^{2}+{x}^{2}{y}^{4}+4\,{x}^{2}{y}^{3}+4\,{y}^{5}+
{x}^{4}+2\,{x}^{2}{y}^{2}+3\,{y}^{4}+2\,{x}^{2}y+4\,{x}^{2}+{y}^{2}+4
\,y
}{2457000}{[4]_4,[1, 2, 2, 2, 2]_5}{[0, 0, 0, 0, 1, 0, 0, 0, 1, 0, 0, 0, 0, 1, 1, 0, 0, 0, 0, 0, 0, 0]}
\PPMM{5}{5}{8A\sb{1}}{4}{[0, 0, 0, 5, 0, 0, 12, 0, 20, 8]}{{x}^{4}{y}^{2}+{x}^{2}{y}^{4}+2\,{x}^{4}+4\,{x}^{2}{y}^{2}+{y}^{4}+{x}
^{2}+4\,{y}^{2}+4
}{2268000}{[1, 1, 2, 2]_5}{[0, 0, 0, 0, 0, 1, 0, 0, 1, 0, 0, 0, 1, 0, 0, 1, 0, 0, 1, 0, 0, 0]}
\PPMM{6}{6}{6A\sb{1} + A\sb{2}}{6}{[0, 0, 0, 2, 0, 6, 9, 0, 12, 14]}{{x}^{6}+4\,{x}^{4}{y}^{2}+2\,{x}^{2}{y}^{4}+2\,{x}^{2}y+{y}^{3}+4
}{1512000}{[1, 1]_5}{[0, 0, 0, 0, 0, 0, 0, 0, 0, 1, 0, 0, 0, 0, 1, 0, 1, 1, 0, 1, 0, 0]}
\PPMM{7}{7}{6A\sb{1} + A\sb{2}}{2}{[0, 0, 0, 2, 0, 4, 9, 4, 13, 12]}{\sqrt {2}{x}^{3}{y}^{3}+ \left( 1+3\,\sqrt {2} \right) {x}^{2}{y}^{4}+
{x}^{4}+ \left( 2+2\,\sqrt {2} \right) {x}^{3}y+ \left( 1+4\,\sqrt {2}
 \right) {x}^{2}{y}^{2}+x{y}^{3}+ \left( 2+2\,\sqrt {2} \right) {y}^{4
}+\sqrt {2}{x}^{2}+ \left( 1+3\,\sqrt {2} \right) xy
}{4914000}{[1, 1, 1, 1, 1, 1, 2]_5}{[0, 0, 0, 0, 0, 0, 0, 0, 1, 0, 0, 0, 0, 0, 1, 0, 0, 1, 0, 1, 0, 1]}
\PPMM{8}{8}{6A\sb{1} + A\sb{2}}{1}{[0, 0, 0, 1, 0, 4, 9, 4, 20, 6]}{{x}^{6}+2\,{x}^{5}y+{x}^{4}{y}^{2}+3\,{x}^{5}+2\,x{y}^{4}+{x}^{3}y+3\,
{x}^{2}{y}^{2}+4\,x{y}^{2}+{y}^{3}+3\,{y}^{2}+3\,x+3\,y
}{9828000}{[1, 1, 1, 1, 1, 1, 1, 1, 1, 1, 1, 1, 1]_5}{[0, 0, 0, 0, 0, 0, 0, 0, 0, 1, 0, 0, 1, 0, 0, 0, 1, 1, 0, 0, 0, 1]}
\PPMM{9}{10}{4A\sb{1} + 2A\sb{2}}{2}{[0, 0, 0, 0, 0, 6, 5, 10, 13, 9]}{{x}^{5}y+ \left( 2+\sqrt {2} \right) {x}^{4}{y}^{2}+ \left( 1+4\,
\sqrt {2} \right) {x}^{3}{y}^{3}+ \left( 3+\sqrt {2} \right) {x}^{2}{y
}^{4}+ \left( 2+4\,\sqrt {2} \right) x{y}^{5}+ \left( 2+\sqrt {2}
 \right) {y}^{6}+ \left( 2+3\,\sqrt {2} \right) {x}^{4}+ \left( 1+4\,
\sqrt {2} \right) {x}^{3}y+ \left( 3+\sqrt {2} \right) {y}^{4}+
 \left( 1+4\,\sqrt {2} \right) {x}^{2}+ \left( 3+\sqrt {2} \right) xy+
3\,{y}^{2}+2+3\,\sqrt {2}
}{4158000}{[1, 1, 1, 1, 1, 2]_5}{[0, 0, 0, 0, 0, 0, 0, 0, 0, 1, 0, 0, 1, 0, 1, 1, 0, 0, 0, 0, 0, 1]}
\PPMM{11}{11}{9A\sb{1}}{54}{[0, 0, 0, 9, 0, 0, 0, 0, 27, 0]}{{x}^{6}+4\,{x}^{3}{y}^{3}+4\,{y}^{6}+{x}^{4}+4\,x{y}^{3}+3\,{x}^{2}+4
}{84000}{[9]_5}{[0, 0, 0, 0, 0, 0, 0, 0, 1, 1, 0, 0, 0, 1, 1, 0, 1, 1, -1, 0, 0, 0]}
\PPMM{12}{12}{9A\sb{1}}{9}{[0, 0, 0, 0, 0, 0, 18, 0, 18, 3]}{4\,{x}^{4}{y}^{2}+3\,{x}^{2}{y}^{4}+4\,{y}^{6}+{x}^{5}+3\,{x}^{3}{y}^{
2}+2\,x{y}^{4}+{x}^{4}+2\,{x}^{2}{y}^{2}+4\,x{y}^{3}+2\,x{y}^{2}+4\,{y
}^{3}+4\,{x}^{2}+2\,xy+1
}{1596000}{[9]_4,[1, 1]_5}{[0, 0, 0, 0, 0, 0, 0, 0, 1, 1, 0, 1, 0, 0, 1, 0, 0, 0, 0, 0, 0, 0]}
\PPMM{13}{14}{9A\sb{1}}{6}{[0, 0, 0, 6, 0, 0, 12, 0, 15, 5]}{\sqrt {2}{x}^{5}y+2\,{x}^{4}{y}^{2}+ \left( 3+2\,\sqrt {2} \right) {x}
^{3}{y}^{3}+ \left( 4+2\,\sqrt {2} \right) {x}^{2}{y}^{4}+ \left( 4+4
\,\sqrt {2} \right) x{y}^{5}+\sqrt {2}{y}^{6}+ \left( 1+\sqrt {2}
 \right) {x}^{4}+ \left( 4+3\,\sqrt {2} \right) {x}^{3}y+ \left( 1+4\,
\sqrt {2} \right) {x}^{2}{y}^{2}+ \left( 1+4\,\sqrt {2} \right) {y}^{4
}+ \left( 3+3\,\sqrt {2} \right) {x}^{2}+ \left( 1+\sqrt {2} \right) x
y+ \left( 3+4\,\sqrt {2} \right) {y}^{2}+1+\sqrt {2}
}{882000}{[1, 6]_5}{[0, 0, 0, 0, 1, 0, 0, 0, 0, 1, 0, 0, 1, 0, 0, 0, 0, 0, 0, 0, 1, 1]}
\PPMM{15}{16}{9A\sb{1}}{3}{[0, 0, 0, 3, 0, 0, 18, 0, 12, 6]}{ \left( 2+2\,\sqrt {2} \right) {x}^{2}{y}^{4}+{x}^{4}y+ \left( 4+4\,
\sqrt {2} \right) {x}^{3}{y}^{2}+ \left( 1+\sqrt {2} \right) {x}^{2}{y
}^{3}+ \left( 2+4\,\sqrt {2} \right) x{y}^{4}+ \left( 1+\sqrt {2}
 \right) {x}^{4}+ \left( 1+2\,\sqrt {2} \right) {x}^{3}y+ \left( 2+3\,
\sqrt {2} \right) x{y}^{3}+ \left( 2+4\,\sqrt {2} \right) {x}^{2}y+
 \left( 2+\sqrt {2} \right) x{y}^{2}+ \left( 2+\sqrt {2} \right) xy+2
\,{y}^{2}
}{2268000}{[1, 1, 3, 3, 3]_5}{[0, -1, 0, 0, 0, 0, 1, 1, 0, 0, 1, 0, 0, 1, 0, 0, 1, 0, 0, 1, 0, 0]}
\PPMM{17}{17}{9A\sb{1}}{2}{[0, 0, 0, 5, 0, 0, 12, 0, 17, 4]}{{x}^{5}y+2\,{x}^{4}{y}^{2}+4\,{x}^{3}{y}^{3}+2\,{x}^{2}{y}^{4}+4\,x{y}
^{5}+3\,{y}^{6}+2\,{x}^{2}{y}^{2}+2\,{x}^{2}+xy
}{3402000}{[1, 1, 1, 2, 2, 2]_5}{[1, 0, 0, 0, 0, 0, 0, 0, 1, 0, 0, 0, 0, 0, 1, 0, 0, 0, 0, 1, 0, 1]}
\PPMM{18}{19}{7A\sb{1} + A\sb{2}}{2}{[0, 0, 0, 1, 0, 4, 13, 3, 12, 4]}{\sqrt {2}{x}^{4}{y}^{2}+ \left( 1+2\,\sqrt {2} \right) {x}^{3}{y}^{3}+
 \left( 3+4\,\sqrt {2} \right) {x}^{2}{y}^{4}+3\,\sqrt {2}x{y}^{5}+
 \left( 2+2\,\sqrt {2} \right) {x}^{4}+\sqrt {2}{x}^{3}y+4\,{x}^{2}{y}
^{2}+3\,\sqrt {2}x{y}^{3}+ \left( 2+2\,\sqrt {2} \right) {y}^{4}+
 \left( 1+\sqrt {2} \right) {x}^{2}+4\,\sqrt {2}xy+ \left( 1+\sqrt {2}
 \right) {y}^{2}+2+2\,\sqrt {2}
}{3024000}{[1, 1, 1, 1]_5}{[0, 0, 0, 0, 0, 0, 0, 0, 1, 0, 0, 0, 1, 1, 1, 1, 0, 0, 0, 0, 0, 0]}
\PPMM{20}{21}{7A\sb{1} + A\sb{2}}{1}{[0, 0, 0, 3, 0, 4, 8, 3, 13, 6]}{2\,\sqrt {2}{x}^{3}{y}^{3}+ \left( 3+\sqrt {2} \right) {x}^{2}{y}^{4}+
{x}^{4}y+ \left( 4+2\,\sqrt {2} \right) {x}^{3}{y}^{2}+ \left( 3+4\,
\sqrt {2} \right) {x}^{2}{y}^{3}+ \left( 4+4\,\sqrt {2} \right) x{y}^{
4}+{x}^{4}+3\,\sqrt {2}{x}^{2}{y}^{2}+3\,\sqrt {2}x{y}^{3}+4\,{y}^{4}+
\sqrt {2}{x}^{3}+2\,\sqrt {2}{x}^{2}y+\sqrt {2}x{y}^{2}+ \left( 2+2\,
\sqrt {2} \right) {y}^{3}+3\,{x}^{2}+ \left( 3+2\,\sqrt {2} \right) xy
+ \left( 2+3\,\sqrt {2} \right) {y}^{2}
}{5292000}{[1, 1, 1, 1, 1, 1, 1]_5}{[0, 0, 0, 0, 0, 0, 0, 0, 0, 1, 0, 0, 0, 0, 1, 0, 0, 1, 0, 1, 0, 1]}
\PPMM{22}{23}{7A\sb{1} + A\sb{2}}{1}{[0, 0, 0, 2, 0, 4, 9, 3, 16, 4]}{{x}^{3}{y}^{3}+ \left( 1+3\,\sqrt {2} \right) {x}^{2}{y}^{4}+{x}^{4}y+
 \left( 3+2\,\sqrt {2} \right) {x}^{3}{y}^{2}+3\,\sqrt {2}{x}^{2}{y}^{
3}+ \left( 2+4\,\sqrt {2} \right) x{y}^{4}+\sqrt {2}{x}^{4}+ \left( 2+
4\,\sqrt {2} \right) {x}^{3}y+4\,x{y}^{3}+ \left( 1+3\,\sqrt {2}
 \right) {y}^{4}+ \left( 2+\sqrt {2} \right) {x}^{3}+ \left( 3+3\,
\sqrt {2} \right) {x}^{2}y+\sqrt {2}{y}^{3}+ \left( 4+2\,\sqrt {2}
 \right) {x}^{2}+4\,\sqrt {2}xy+ \left( 1+4\,\sqrt {2} \right) {y}^{2}
}{5292000}{[1, 1, 1, 1, 1, 1, 1]_5}{[0, 0, 0, 0, 0, 0, 0, 0, 0, 1, 1, 0, 1, 0, 0, 0, 0, 1, 0, 0, 0, 1]}
\PPMM{24}{24}{5A\sb{1} + 2A\sb{2}}{8}{[0, 0, 0, 2, 0, 10, 0, 0, 16, 8]}{{x}^{3}{y}^{3}+{x}^{4}+{x}^{2}{y}^{2}+{y}^{4}+xy
}{378000}{[2]_5}{[0, 0, 0, 0, 0, 0, 0, 0, 0, 1, 0, 1, 0, 0, 0, 0, 1, 1, 0, 1, 0, 0]}
\PPMM{25}{26}{5A\sb{1} + 2A\sb{2}}{2}{[0, 0, 0, 0, 0, 6, 9, 8, 6, 8]}{{x}^{2}{y}^{4}+{x}^{4}y+ \left( 1+\sqrt {2} \right) {x}^{3}{y}^{2}+
 \left( 3+4\,\sqrt {2} \right) {x}^{2}{y}^{3}+ \left( 3+2\,\sqrt {2}
 \right) x{y}^{4}+ \left( 1+\sqrt {2} \right) {x}^{3}y+ \left( 1+2\,
\sqrt {2} \right) {x}^{2}{y}^{2}+ \left( 3+\sqrt {2} \right) x{y}^{3}+
 \left( 1+4\,\sqrt {2} \right) {x}^{2}y+ \left( 1+2\,\sqrt {2}
 \right) x{y}^{2}+3\,{x}^{2}+4\,\sqrt {2}xy+ \left( 1+4\,\sqrt {2}
 \right) {y}^{2}
}{2268000}{[1, 1, 2, 2]_5}{[0, 0, 0, 0, 0, 0, 0, 0, 0, 1, 0, 0, 1, 0, 1, 0, 0, 1, 0, 0, 0, 1]}
\PPMM{27}{27}{5A\sb{1} + 2A\sb{2}}{1}{[0, 0, 0, 1, 0, 8, 4, 4, 13, 6]}{{x}^{6}+3\,{x}^{4}{y}^{2}+{x}^{2}{y}^{4}+{x}^{3}{y}^{2}+3\,{x}^{2}{y}^
{3}+x{y}^{4}+2\,{x}^{3}y+3\,x{y}^{3}+4\,{x}^{3}+3\,{x}^{2}y+4\,x{y}^{2
}+4\,{y}^{2}
}{3780000}{[1, 1, 1, 1, 1]_5}{[0, 0, 0, 0, 0, 0, 0, 0, 0, 1, 0, 0, 0, 0, 0, 0, 1, 1, 0, 1, 0, 1]}
\PPMM{28}{29}{5A\sb{1} + 2A\sb{2}}{1}{[0, 0, 0, 1, 0, 6, 4, 8, 14, 4]}{{x}^{4}{y}^{2}+ \left( 2+2\,\sqrt {2} \right) {x}^{3}{y}^{3}+ \left( 3
+2\,\sqrt {2} \right) {x}^{2}{y}^{4}+ \left( 1+\sqrt {2} \right) {x}^{
4}y+2\,\sqrt {2}{x}^{3}{y}^{2}+ \left( 2+\sqrt {2} \right) x{y}^{4}+
 \left( 2+3\,\sqrt {2} \right) {x}^{4}+4\,{x}^{2}{y}^{2}+ \left( 1+3\,
\sqrt {2} \right) {y}^{4}+ \left( 3+4\,\sqrt {2} \right) {x}^{3}+4\,
\sqrt {2}x{y}^{2}+ \left( 1+\sqrt {2} \right) {y}^{3}+ \left( 4+2\,
\sqrt {2} \right) {x}^{2}+ \left( 3+3\,\sqrt {2} \right) xy+ \left( 1+
2\,\sqrt {2} \right) {y}^{2}
}{4536000}{[1, 1, 1, 1, 1, 1]_5}{[0, 0, 0, 0, 0, 0, 0, 0, 0, 0, 0, 0, 1, 0, 1, 0, 0, 1, 0, 1, 0, 1]}
\PPMM{30}{31}{3A\sb{1} + 3A\sb{2}}{3}{[0, 0, 0, 0, 0, 6, 3, 15, 6, 6]}{{x}^{4}{y}^{2}+ \left( 1+\sqrt {2} \right) {x}^{3}{y}^{3}+ \left( 2+3
\,\sqrt {2} \right) {x}^{2}{y}^{4}+{x}^{4}y+4\,{x}^{3}{y}^{2}+ \left( 
3+3\,\sqrt {2} \right) {x}^{2}{y}^{3}+4\,\sqrt {2}x{y}^{4}+4\,{x}^{4}+
 \left( 2+3\,\sqrt {2} \right) {x}^{3}y+{x}^{2}{y}^{2}+ \left( 4+2\,
\sqrt {2} \right) {y}^{4}+ \left( 3+2\,\sqrt {2} \right) {x}^{3}+
 \left( 4+3\,\sqrt {2} \right) {x}^{2}y+ \left( 4+4\,\sqrt {2}
 \right) x{y}^{2}+ \left( 2+4\,\sqrt {2} \right) {y}^{3}+{x}^{2}+
\sqrt {2}xy+3\,{y}^{2}
}{1260000}{[1, 3, 3]_5}{[0, 0, 0, 0, 0, 0, 0, 0, 0, 0, 0, 1, 1, 0, 0, 0, 1, 1, 0, 0, 0, 1]}
\PPMM{32}{32}{10A\sb{1}}{20}{[0, 0, 0, 0, 0, 0, 20, 0, 10, 1]}{{x}^{6}+2\,{x}^{4}y+{y}^{5}+4\,{x}^{2}{y}^{2}+{y}^{3}+4\,{x}^{2}+4\,y
}{226800}{[20]_4,[4]_5}{[0, 0, 0, 0, 0, 0, 0, 1, 0, 0, 0, 0, 1, 0, 0, 0, 0, 1, 0, 0, 0, 1]}
\PPMM{33}{33}{10A\sb{1}}{4}{[0, 0, 0, 6, 0, 0, 16, 0, 4, 6]}{{x}^{6}+{x}^{4}{y}^{2}+3\,{x}^{3}{y}^{3}+3\,{x}^{2}{y}^{4}+2\,{y}^{6}+
{x}^{2}{y}^{2}+4\,xy+4
}{756000}{[2, 2]_5}{[0, 0, 0, 0, 1, 0, 0, 0, 0, 1, 1, 0, 0, 0, 0, 0, 1, 0, 0, 1, 0, 0]}
\PPMM{34}{35}{10A\sb{1}}{2}{[0, 0, 0, 4, 0, 0, 17, 0, 9, 3]}{{x}^{5}y+{x}^{4}{y}^{2}+3\,{x}^{3}{y}^{3}+ \left( 4+\sqrt {2} \right) 
{x}^{2}{y}^{4}+ \left( 1+\sqrt {2} \right) x{y}^{5}+4\,\sqrt {2}{y}^{6
}+2\,{x}^{4}+4\,{x}^{3}y+ \left( 4+4\,\sqrt {2} \right) x{y}^{3}+
 \left( 2+2\,\sqrt {2} \right) {y}^{4}+{x}^{2}+ \left( 1+4\,\sqrt {2}
 \right) {y}^{2}+2
}{1890000}{[1, 1, 2]_5}{[0, -1, 0, 1, 0, 1, 1, 0, 0, 0, 1, 0, 0, 1, 0, 0, 1, 0, 0, 0, 0, 0]}
\PPMM{36}{36}{8A\sb{1} + A\sb{2}}{1}{[0, 0, 0, 3, 0, 4, 11, 2, 7, 4]}{{x}^{5}y+4\,{x}^{2}{y}^{4}+{x}^{5}+3\,{x}^{4}y+2\,{x}^{2}{y}^{3}+3\,{x
}^{4}+2\,{y}^{4}+2\,x{y}^{2}+2\,{y}^{3}+2\,{x}^{2}+3\,xy+4\,y
}{3780000}{[1, 1, 1, 1, 1]_5}{[0, 0, 0, 0, 0, 0, 0, 0, 0, 1, 1, 0, 0, 0, 1, 0, 0, 0, 0, 1, 0, 1]}
\PPMM{37}{37}{8A\sb{1} + A\sb{2}}{1}{[0, 0, 0, 2, 0, 4, 13, 2, 6, 6]}{{x}^{4}{y}^{2}+4\,{x}^{3}{y}^{3}+4\,{x}^{2}{y}^{4}+3\,x{y}^{4}+{y}^{5}
+4\,x{y}^{3}+4\,{x}^{3}+4\,{x}^{2}y+4\,{x}^{2}+xy+3\,{y}^{2}+3\,x+3\,y
}{3024000}{[1, 1, 1, 1]_5}{[0, 0, 0, 0, 0, 0, 0, 0, 1, 0, 0, 0, 1, 0, 1, 1, 0, 0, 0, 0, 0, 1]}
\PPMM{38}{39}{8A\sb{1} + A\sb{2}}{1}{[0, 0, 0, 2, 0, 4, 12, 2, 10, 2]}{ \left( 1+4\,\sqrt {2} \right) {x}^{2}{y}^{4}+{x}^{4}y+ \left( 1+
\sqrt {2} \right) {x}^{3}{y}^{2}+3\,{x}^{2}{y}^{3}+ \left( 2+\sqrt {2}
 \right) x{y}^{4}+{x}^{4}+ \left( 2+2\,\sqrt {2} \right) {x}^{3}y+3\,{
x}^{2}{y}^{2}+\sqrt {2}{y}^{4}+4\,\sqrt {2}{x}^{3}+ \left( 2+3\,\sqrt 
{2} \right) {x}^{2}y+{y}^{3}+3\,{x}^{2}+ \left( 2+4\,\sqrt {2}
 \right) xy+3\,{y}^{2}
}{3024000}{[1, 1, 1, 1]_5}{[0, 0, 0, 0, 0, 0, 0, 1, 0, 0, 1, 0, 1, 0, 1, 0, 0, 0, 0, 0, 0, 1]}
\PPMM{40}{41}{6A\sb{1} + 2A\sb{2}}{6}{[0, 0, 0, 2, 0, 6, 6, 6, 6, 5]}{\sqrt {2}{x}^{6}+ \left( 1+\sqrt {2} \right) {x}^{5}y+ \left( 1+4\,
\sqrt {2} \right) {x}^{3}{y}^{3}+\sqrt {2}{x}^{2}{y}^{4}+2\,\sqrt {2}x
{y}^{5}+ \left( 3+\sqrt {2} \right) {y}^{6}+ \left( 4+3\,\sqrt {2}
 \right) {x}^{4}+3\,{x}^{3}y+ \left( 2+\sqrt {2} \right) {x}^{2}{y}^{2
}+ \left( 4+4\,\sqrt {2} \right) x{y}^{3}+ \left( 3+3\,\sqrt {2}
 \right) {y}^{4}+ \left( 2+\sqrt {2} \right) {x}^{2}+ \left( 1+4\,
\sqrt {2} \right) xy+\sqrt {2}{y}^{2}+4
}{378000}{[2]_5}{[0, 0, 0, 0, 0, 1, 0, 0, 0, 0, 0, 1, 1, 0, 0, 0, 1, 0, 0, 1, 0, 0]}
\PPMM{42}{43}{6A\sb{1} + 2A\sb{2}}{2}{[0, 0, 0, 2, 0, 6, 5, 6, 10, 1]}{{x}^{4}{y}^{2}+ \left( 2+3\,\sqrt {2} \right) {x}^{3}{y}^{3}+ \left( 3
+3\,\sqrt {2} \right) {x}^{2}{y}^{4}+{x}^{4}y+ \left( 3+3\,\sqrt {2}
 \right) {x}^{2}{y}^{3}+3\,\sqrt {2}x{y}^{4}+2\,\sqrt {2}{x}^{4}+
 \left( 3+4\,\sqrt {2} \right) {x}^{2}{y}^{2}+2\,\sqrt {2}x{y}^{3}+
 \left( 4+\sqrt {2} \right) {y}^{4}+ \left( 3+3\,\sqrt {2} \right) {x}
^{3}+ \left( 4+3\,\sqrt {2} \right) {y}^{3}+ \left( 4+2\,\sqrt {2}
 \right) {x}^{2}+4\,\sqrt {2}xy+ \left( 3+2\,\sqrt {2} \right) {y}^{2}
}{1512000}{[1, 1]_5}{[0, 0, 0, 0, 0, 0, 0, 0, 0, 1, 1, 1, 0, 0, 1, 0, 0, 0, 0, 0, 0, 1]}
\PPMM{44}{45}{6A\sb{1} + 2A\sb{2}}{1}{[0, 0, 0, 1, 0, 6, 7, 6, 9, 3]}{2\,{x}^{3}{y}^{3}+3\,\sqrt {2}{x}^{2}{y}^{4}+ \left( 4+2\,\sqrt {2}
 \right) {x}^{3}{y}^{2}+ \left( 3+\sqrt {2} \right) {x}^{2}{y}^{3}+
 \left( 1+2\,\sqrt {2} \right) x{y}^{4}+{x}^{4}+ \left( 3+\sqrt {2}
 \right) {x}^{3}y+3\,{x}^{2}{y}^{2}+3\,{y}^{4}+ \left( 1+4\,\sqrt {2}
 \right) {x}^{3}+ \left( 1+3\,\sqrt {2} \right) {x}^{2}y+4\,x{y}^{2}+
 \left( 2+4\,\sqrt {2} \right) {y}^{3}+ \left( 3+\sqrt {2} \right) {x}
^{2}+ \left( 1+\sqrt {2} \right) xy+ \left( 3+3\,\sqrt {2} \right) {y}
^{2}
}{2268000}{[1, 1, 1]_5}{[0, 0, 0, 0, 0, 0, 0, 0, 1, 0, 0, 0, 0, 1, 1, 0, 0, 1, 0, 0, 0, 1]}
\PPMM{46}{46}{4A\sb{1} + 3A\sb{2}}{3}{[0, 0, 0, 1, 0, 6, 3, 12, 7, 0]}{{x}^{6}+3\,{x}^{3}{y}^{3}+4\,{x}^{4}y+x{y}^{4}+3\,{x}^{2}{y}^{2}+4\,{x
}^{3}+3\,xy+4
}{756000}{[1]_5}{[0, 0, 0, 0, 0, 0, 0, 0, 0, 1, 0, 1, 1, 0, 1, 0, 0, 0, 0, 0, 0, 1]}
\PPMM{47}{47}{4A\sb{1} + 3A\sb{2}}{2}{[0, 0, 0, 0, 0, 8, 4, 8, 4, 5]}{{x}^{6}+3\,{x}^{4}{y}^{2}+4\,{x}^{2}{y}^{4}+2\,{y}^{6}+4\,{x}^{2}{y}^{
3}+2\,{x}^{4}+3\,{x}^{2}{y}^{2}+4\,{x}^{2}y+{y}^{3}+3\,{x}^{2}
}{1134000}{[1, 2]_5}{[0, 0, 0, 0, 0, 0, 0, 0, 1, 0, 0, 1, 1, 1, 1, 0, 0, 0, 0, 0, 0, 0]}
\PPMM{48}{48}{4A\sb{1} + 3A\sb{2}}{1}{[0, 0, 0, 0, 0, 8, 5, 8, 5, 4]}{2\,{x}^{4}{y}^{2}+{x}^{5}+2\,{x}^{2}{y}^{3}+4\,x{y}^{4}+2\,{x}^{3}y+3
\,{x}^{2}{y}^{2}+2\,x{y}^{3}+2\,x{y}^{2}+3\,{x}^{2}+2\,xy+2\,{y}^{2}
}{2268000}{[1, 1, 1]_5}{[0, 0, 0, 0, 0, 0, 0, 0, 0, 0, 0, 0, 0, 0, 1, 0, 1, 1, 0, 1, 0, 1]}
\PPMM{49}{49}{11A\sb{1}}{4}{[0, 0, 0, 8, 0, 0, 8, 0, 10, 0]}{{x}^{6}+{x}^{4}{y}^{2}+4\,{x}^{2}{y}^{4}+3\,{x}^{5}+3\,x{y}^{4}+{x}^{2
}{y}^{2}+2\,{y}^{4}+{x}^{3}+4\,{y}^{2}+2\,x+2
}{378000}{[2]_5}{[0, 0, 0, 0, 0, 0, 0, 1, 0, 0, 0, 1, 1, 0, 0, 1, 0, 1, 0, 0, 0, 0]}
\PPMM{50}{51}{9A\sb{1} + A\sb{2}}{1}{[0, 0, 0, 4, 0, 4, 9, 1, 7, 2]}{ \left( 4+\sqrt {2} \right) {x}^{3}{y}^{3}+ \left( 4+2\,\sqrt {2}
 \right) {x}^{2}{y}^{4}+{x}^{4}y+4\,x{y}^{4}+\sqrt {2}{x}^{4}+ \left( 
3+3\,\sqrt {2} \right) {x}^{2}{y}^{2}+4\,x{y}^{3}+ \left( 4+2\,\sqrt {
2} \right) {y}^{4}+ \left( 2+3\,\sqrt {2} \right) {x}^{3}+ \left( 4+4
\,\sqrt {2} \right) {x}^{2}y+ \left( 4+3\,\sqrt {2} \right) {y}^{3}+
 \left( 1+2\,\sqrt {2} \right) {x}^{2}+3\,\sqrt {2}xy+ \left( 2+3\,
\sqrt {2} \right) {y}^{2}
}{1512000}{[1, 1]_5}{[0, 0, 0, 0, 0, 0, 0, 0, 0, 1, 0, 1, 1, 0, 0, 0, 0, 1, 0, 0, 0, 1]}
\PPMM{52}{52}{7A\sb{1} + 2A\sb{2}}{2}{[0, 0, 0, 4, 0, 6, 2, 4, 8, 2]}{{x}^{6}+{x}^{5}y+2\,{x}^{4}{y}^{2}+{x}^{2}{y}^{4}+3\,{y}^{6}+{x}^{4}+{
x}^{2}{y}^{2}+x{y}^{3}+4\,xy+{y}^{2}+3
}{378000}{[2]_5}{[0, 0, 0, 0, 0, 0, 1, 1, 1, 0, 0, 1, 1, 0, 0, 0, 0, 0, 0, 0, 0, 0]}
\PPMM{53}{54}{7A\sb{1} + 2A\sb{2}}{1}{[0, 0, 0, 2, 0, 6, 7, 4, 7, 0]}{ \left( 2+2\,\sqrt {2} \right) {x}^{2}{y}^{4}+{x}^{4}y+ \left( 4+
\sqrt {2} \right) {x}^{3}{y}^{2}+ \left( 2+2\,\sqrt {2} \right) {x}^{2
}{y}^{3}+4\,x{y}^{4}+3\,{x}^{4}+ \left( 3+2\,\sqrt {2} \right) {x}^{3}
y+\sqrt {2}{x}^{2}{y}^{2}+ \left( 3+4\,\sqrt {2} \right) x{y}^{3}+
 \left( 2+2\,\sqrt {2} \right) {y}^{4}+ \left( 3+4\,\sqrt {2} \right) 
{x}^{2}y+ \left( 2+3\,\sqrt {2} \right) x{y}^{2}+ \left( 2+3\,\sqrt {2
} \right) {y}^{3}+\sqrt {2}xy+4\,{y}^{2}
}{1512000}{[1, 1]_5}{[0, 0, 0, 0, 0, 1, 0, 0, 0, 1, 0, 1, 0, 0, 1, 0, 0, 0, 0, 0, 0, 1]}
\PPMM{55}{56}{7A\sb{1} + 2A\sb{2}}{1}{[0, 0, 0, 2, 0, 6, 6, 4, 6, 1]}{ \left( 3+2\,\sqrt {2} \right) {x}^{2}{y}^{4}+{x}^{4}y+{x}^{3}{y}^{2}+
 \left( 2+\sqrt {2} \right) {x}^{2}{y}^{3}+ \left( 2+4\,\sqrt {2}
 \right) x{y}^{4}+\sqrt {2}{x}^{4}+ \left( 3+4\,\sqrt {2} \right) {x}^
{3}y+ \left( 2+4\,\sqrt {2} \right) x{y}^{3}+4\,\sqrt {2}{y}^{4}+
\sqrt {2}{x}^{3}+ \left( 1+4\,\sqrt {2} \right) {x}^{2}y+ \left( 4+
\sqrt {2} \right) {y}^{3}+4\,{x}^{2}+4\,\sqrt {2}xy+2\,{y}^{2}
}{1512000}{[1, 1]_5}{[0, 0, 0, 0, 0, 0, 0, 0, 0, 1, 0, 0, 0, 1, 1, 0, 1, 0, 0, 1, 0, 0]}
\PPMM{57}{58}{5A\sb{1} + 3A\sb{2}}{2}{[0, 0, 0, 1, 0, 6, 4, 9, 4, 1]}{\sqrt {2}{x}^{4}{y}^{2}+ \left( 4+2\,\sqrt {2} \right) {x}^{3}{y}^{3}+
4\,\sqrt {2}{x}^{2}{y}^{4}+ \left( 3+\sqrt {2} \right) x{y}^{5}+4\,
\sqrt {2}{y}^{6}+ \left( 1+4\,\sqrt {2} \right) {x}^{4}+ \left( 3+
\sqrt {2} \right) {x}^{3}y+2\,\sqrt {2}{x}^{2}{y}^{2}+ \left( 1+\sqrt 
{2} \right) {y}^{4}+ \left( 3+2\,\sqrt {2} \right) {x}^{2}+ \left( 2+4
\,\sqrt {2} \right) xy+ \left( 3+\sqrt {2} \right) {y}^{2}+1+4\,\sqrt 
{2}
}{756000}{[1]_5}{[0, 0, 0, 0, 0, 0, 0, 0, 0, 0, 1, 0, 1, 0, 0, 0, 0, 1, 0, 1, 0, 1]}
\PPMM{59}{60}{8A\sb{1} + 2A\sb{2}}{2}{[0, 0, 0, 2, 0, 6, 7, 2, 5, 0]}{{x}^{5}y+ \left( 1+\sqrt {2} \right) {x}^{4}{y}^{2}+2\,{x}^{2}{y}^{4}+
 \left( 2+\sqrt {2} \right) x{y}^{5}+ \left( 4+3\,\sqrt {2} \right) {y
}^{6}+3\,{x}^{4}+ \left( 4+4\,\sqrt {2} \right) {x}^{3}y+ \left( 1+3\,
\sqrt {2} \right) {x}^{2}{y}^{2}+ \left( 3+3\,\sqrt {2} \right) x{y}^{
3}+4\,\sqrt {2}{y}^{4}+4\,{x}^{2}+\sqrt {2}xy+ \left( 3+3\,\sqrt {2}
 \right) {y}^{2}+3
}{378000}{[2]_5}{[0, 0, 0, 0, 1, 0, 0, 1, 0, 0, 0, 0, 1, 0, 0, 1, 0, 0, 1, 0, 0, 0]}
\PPMM{61}{62}{8A\sb{1} + 2A\sb{2}}{1}{[0, 0, 0, 3, 0, 6, 7, 2, 3, 1]}{{x}^{3}{y}^{3}+ \left( 3+4\,\sqrt {2} \right) {x}^{2}{y}^{4}+2\,\sqrt 
{2}{x}^{3}{y}^{2}+2\,{x}^{2}{y}^{3}+2\,x{y}^{4}+{x}^{4}+ \left( 4+2\,
\sqrt {2} \right) {x}^{3}y+ \left( 1+\sqrt {2} \right) {x}^{2}{y}^{2}+
 \left( 3+2\,\sqrt {2} \right) {y}^{4}+ \left( 3+\sqrt {2} \right) {x}
^{3}+ \left( 3+4\,\sqrt {2} \right) {x}^{2}y+4\,\sqrt {2}{y}^{3}+
 \left( 2+4\,\sqrt {2} \right) {x}^{2}+ \left( 3+2\,\sqrt {2} \right) 
xy+ \left( 4+4\,\sqrt {2} \right) {y}^{2}
}{756000}{[1]_5}{[0, 0, 0, 0, 0, 0, 0, 1, 0, 1, 0, 1, 0, 0, 0, 0, 0, 1, 0, 0, 0, 1]}
\PPMM{63}{63}{6A\sb{1} + 3A\sb{2}}{3}{[0, 0, 0, 3, 0, 6, 3, 6, 3, 0]}{{x}^{4}{y}^{2}+{x}^{4}y+{x}^{3}{y}^{2}+2\,{y}^{5}+2\,{x}^{4}+4\,{x}^{2
}{y}^{2}+3\,x{y}^{3}+4\,{y}^{4}+4\,{x}^{3}+2\,x{y}^{2}+{y}^{3}+2\,{x}^
{2}+{y}^{2}
}{252000}{[3]_5}{[0, 0, 0, 1, 0, 0, 1, 0, 0, 0, 0, 0, 1, 0, 0, 1, 0, 0, 1, 0, 0, 0]}
\PPMM{64}{64}{6A\sb{1} + 3A\sb{2}}{3}{[0, 0, 0, 0, 0, 6, 9, 6, 0, 1]}{{x}^{4}{y}^{2}+{x}^{3}{y}^{3}+{x}^{2}{y}^{4}+3\,{x}^{3}{y}^{2}+{x}^{2}
{y}^{3}+3\,{x}^{3}y+{x}^{2}{y}^{2}+2\,{x}^{3}+2\,{x}^{2}y+3\,{y}^{3}+2
\,{x}^{2}+3\,xy+4\,y+4
}{252000}{[3]_5}{[0, -1, 0, 0, 1, 0, 1, 0, 1, 1, 0, 0, 0, 0, 0, 0, 1, 0, 0, 0, 0, 1]}
}
\section{Calculation of Example~\ref{example:nonprojautXF}}\label{sec:proof2}
The polynomials in Example~\ref{example:nonprojautXF}
that give a non-projective involution $g$ of $X_F$
are calculated by the following method.
Recall that $\theh\sprime$ in Step 8 of Section~\ref{sec:proof1}
is the representative vector of 
the  $\Aut(X_F, \theh)$-orbit  $\VVV_4\cap \projmodel_0$.
We have already calculated a birational morphism
$$
\phi_{\theh\sprime}=(\omega: \xi_0:\xi_1:\xi_2)\;:\; X_F\to X_{\theh\sprime},
$$
and the defining equation $s_{\theh\sprime}$ of $B_{\theh\sprime}$.
We have observed that $s_{\theh\sprime}$ is written as
$$
s_{\theh\sprime}(x,y,z)=\lambda^2\, \vx\, H\, \transpose{\overline{\vx}},
$$
where $\lambda\in \F_{25}\sptimes$, $\vx=(x,y,z)$, $\overline{\vx}=(x^5, y^5, z^5)$ 
and $H$ satisfies $H=\transpose{\overline{H}}$.
We search for $M\in \GL_3(\F_{25})$ such that $H=M\transpose{\overline{M}}$
(see~n.~3 of \cite{MR0213949}),
and put
$$
\omega\sprime:=\lambda\inv \omega,
\quad
(\xi_0\sprime,\xi_1\sprime,\xi_2\sprime):=(\xi_0,\xi_1,\xi_2)\,M.
$$
Then the polynomials $\omega\sprime, \xi_0\sprime,\xi_1\sprime,\xi_2\sprime$ satisfy
$$
\omega\sp{\prime 2}=\xi_0\sp{\prime 6}+\xi_1\sp{\prime 6}+\xi_2\sp{\prime 6}.
$$
Hence the rational map
from $X_F$ to $\P(3,1,1,1)$ given by
$(\omega\sprime: \xi_0\sprime: \xi_1\sprime: \xi_2\sprime)$
defines an automorphism $\gamma$ of $X_F$.
We choose $\theh$-lines $\ell_{i_1}, \dots, \ell_{i_{22}}$
such that $[\ell_{i_1}], \dots, [\ell_{i_{22}}]$ span $\NS(X)\tensor \Q$,
and that none of $i_1, \dots, i_{22}$ is contained in
the set $J$ of indices in the expression~\eqref{eq:dv} for $\theh\sprime$
that was used in the calculation of $\phi_{\theh\sprime}$.
Then we can calculate the images $\ell_{i_{\nu}}^\gamma$
of $\ell_{i_{\nu}}$ by $\gamma$ using the parametric representations of $\ell_{i_{\nu}}$
and the polynomials $(\omega\sprime: \xi_0\sprime: \xi_1\sprime: \xi_2\sprime)$.
Computing the intersection numbers of $\ell_{i_{\nu}}^\gamma$ 
with $\ell_1, \dots, \ell_{22}$,
we  calculate the action of $\gamma$ on $\NS(X)$.
Let $v\mapsto v\Gamma$ denote the matrix representation of this action.
We then search for $\tau\in \Aut(X, \theh)$ such that
its action on $X_F$ is given by
$$
w\mapsto \sigma w,\quad (x,y,z)\mapsto  (x,y,z)\,T_{\tau},
$$
where $\sigma\in \F_{25}\sptimes$, $T_{\tau}\in \GU_3(\F_{25})$, and 
its action on $\NS(X)$ is given by  $v\mapsto v N_{\tau}$,
where  $N_{\tau}$ is a matrix 
satisfying  $(\Gamma N_{\tau})^2=\Id_{22}$.
We define $(\omega\spprime, \xi_0\spprime,\xi_1\spprime,\xi_2\spprime)$ by
$$
\omega\spprime:=\sigma \omega\sprime,
\quad
(\xi_0\spprime,\xi_1\spprime,\xi_2\spprime):=(\xi_0\sprime,\xi_1\sprime,\xi_2\sprime)\, T_{\tau},
$$
and replace the original polynomials $(\omega, \xi_0, \xi_1, \xi_2)$ by  
$(\omega\spprime, \xi_0\spprime, \xi_1\spprime, \xi_2\spprime)$.
Then the automorphism $X_F\to X_F$ given by $(\omega:  \xi_0: \xi_1: \xi_2)$
is of order $2$,
because its action $v\mapsto v\Gamma N_{\tau}$ on $\NS(X)$ is of order $2$. 
%
%
\par
\medskip
{\bf Addendum.}
After the first version of this paper was finished,
we have investigated $X$ by Borcherds' method~\cite{MR913200, MR1654763},
and obtained polarizations $h_1$ and $h_2$ of degree $h_1^2=60$ and $h_2^2=80$
with large projective automorphism groups.
The  group $\Aut(X, h_1)$ is  isomorphic to the alternating group of degree $8$,
while the order of $\Aut(X, h_2)$ is $1152$.
%

%
%
%
\bibliographystyle{plain}

\def\cftil#1{\ifmmode\setbox7\hbox{$\accent"5E#1$}\else
  \setbox7\hbox{\accent"5E#1}\penalty 10000\relax\fi\raise 1\ht7
  \hbox{\lower1.15ex\hbox to 1\wd7{\hss\accent"7E\hss}}\penalty 10000
  \hskip-1\wd7\penalty 10000\box7} \def\cprime{$'$} \def\cprime{$'$}
  \def\cprime{$'$} \def\cprime{$'$}

\end{document}